\newtheorem{theorem}{Theorem}[section]
\theoremstyle{definition}
\newtheorem{remark}{Remark}
\newcommand{\R}{\mathbb{R}}
\newcommand{\D}{\mathbb{D}}
\newcommand{\MM}{\mathbb{M}}
\newcommand{\xx}{\mathbf{x}}
\newcommand{\zz}{\mathbf{z}}
\newcommand{\ff}{\mathbf{f}}
\newcommand{\kk}{\mathbf{k}}
\newcommand{\uu}{\mathbf{u}}
\newcommand{\ga}{\alpha}
\newcommand{\gb}{\mathbf{\beta}}
\newcommand{\gc}{\gamma}
\newcommand{\vph}{\varphi}
\newcommand{\gD}{\Delta}
\newcommand{\po}{\partial}
\newcommand{\ve}{\varepsilon}
\newcommand{\vae}{\varepsilon}
\newcommand{\gd}{\delta}
\newcommand{\gl}{\lambda}
\renewcommand{\b}{\beta}
\newcommand{\Om}{\Omega}
\newcommand{\grad}{\nabla}
\newcommand{\bea}{\begin{eqnarray}}
\newcommand{\eea}{\end{eqnarray}}
\newcommand{\yy}{\mathbf{y}}
\newcommand{\nnu}{{\boldsymbol \nu}}
\numberwithin{equation}{section}
\numberwithin{figure}{section}
\title[Transonic Flows Past Curved Wedges]
{Transonic Flows with Shocks Past Curved Wedges for the Full Euler
Equations}
\author[Gui-Qiang Chen,  Jun Chen and  Mikhail Feldman]{}
\subjclass{35M12, 35R35, 76H05, 76L05, 35L67, 35L65, 35B35,
35Q31, 76N10, 76N15, 35B30, 35B40, 35Q35}
 \keywords{Transonic flow, full Euler equations, transonic shock, curved wedge,
 supersonic, subsonic, shock-front, mixed type,
 mixed-composite hyperbolic-elliptic type,
free boundary problem, existence, stability,
asymptotic behavior, decay rate.}
 \email{chengq@maths.ox.ac.uk}
 \email{chenjun@sustc.edu.cn}
 \email{feldman@math.wisc.edu}
\thanks{$^*$ Corresponding author: Gui-Qiang G. Chen}
\dedicatory{Dedicated to Peter Lax on the occasion of his $90$th birthday\\ with admiration and affection}
\begin{document}
\maketitle
\centerline{\scshape Gui-Qiang Chen$^*$}
\medskip
{\footnotesize
 \centerline{Mathematical Institute, University of Oxford}
  \centerline{Andrew Wiles Building, Radcliffe Observatory Quarter, Woodstock Road}
   \centerline{ Oxford, OX2 6GG, UK}
}

\medskip

\centerline{\scshape Jun Chen }
\medskip
{\footnotesize
 \centerline{ Department of Mathematics, Southern University of Science and Technology}
   \centerline{Shenzhen, Guangdong 518055, P.R. China}
}

\medskip

\centerline{\scshape Mikhail Feldman }
\medskip
{\footnotesize
 \centerline{Department of Mathematics, University of Wisconsin-Madison}
   \centerline{ Madison, WI 53706--1388, USA}
}

\begin{abstract}
We establish the existence, stability, and asymptotic behavior of
transonic flows with a transonic shock past a curved wedge
for the steady full Euler equations in an important physical regime,
which form a nonlinear system of mixed-composite hyperbolic-elliptic type.
To achieve this, we first employ the transformation from Eulerian
to Lagrangian coordinates
and then exploit one of the new equations
to identify a potential function in Lagrangian coordinates.
By capturing the conservation properties of the system,
we derive a single second-order nonlinear elliptic equation for
the potential function in the subsonic region
so that the transonic shock problem is reformulated
as a one-phase free boundary problem for the
nonlinear equation with the shock-front as a free boundary.
One of the advantages of this approach is that,
given the shock location or equivalently the entropy function
along the shock-front downstream,
all the physical variables can be expressed as functions of the gradient
of the potential function, and the downstream asymptotic behavior
of the potential function at infinity can be uniquely determined
with a uniform decay rate.
To solve the free boundary problem,
we employ the hodograph transformation to transfer the free boundary
to a fixed boundary,
while keeping the ellipticity of the nonlinear equation, and
then update the entropy function to prove that the updating map
has a fixed point.
Another advantage in our analysis is in the context of the full Euler equations
so that
the Bernoulli constant is allowed to change for different fluid trajectories.
\end{abstract}
\maketitle

\section{Introduction}
We are concerned with the existence, stability, and asymptotic behavior of
steady transonic flows with transonic shocks past curved wedges
for the full Euler equations. The two-dimensional steady, full Euler
equations for polytropic gases have the form
({\it cf.} \cite{Chorin,CF,Majda}):
\begin{equation}\label{Euler1}
\left\{\begin{array}{ll}
 \nabla\cdot (\rho \uu)=0, \\
\nabla\cdot\left(\rho{\uu\otimes\uu}\right)
  +\nabla p=0,\\
\nabla\cdot\left(\rho\uu(E+ \frac{p}{\rho})\right)=0,
\end{array}\right.
\end{equation}
where $\grad=\grad_{\xx}$ is the gradient in $\xx=(x_1,x_2)\in\R^2$,
$\uu=(u_1, u_2)$ the velocity, $\rho$ the density, $p$ the pressure, and
$
E=\frac{1}{2}|\uu|^2+e
$
the total energy with internal energy $e$.

Choose density $\rho$ and entropy $S$ as the independent thermodynamical variables.
Then the constitutive relations can be written as
$(e,p,\theta)=(e(\rho,S), p(\rho,S),\theta(\rho,S))$ governed by
$$
\theta dS=de-\frac{p}{\rho^2}d\rho,
$$
where $\theta$ represents the temperature.
For a polytropic gas,
\begin{equation}\label{gas-1}
p=(\gamma-1)\rho e,
\qquad e=c_v\theta, \qquad \gamma=1+\frac{R}{c_v}>1,
\end{equation}
or equivalently,
\begin{equation}\label{gas-2}
p=p(\rho,S)=\kappa\rho^\gamma e^{S/c_v}, \qquad e=e(\rho,
S)=\frac{\kappa}{\gamma-1}\rho^{\gamma-1}e^{S/c_v},
\end{equation}
where $R>0$ may be taken to be the universal gas
constant divided by the effective molecular weight of the particular
gas, $c_v>0$ is the specific heat at constant volume,
$\gamma>1$ is the adiabatic exponent, and $\kappa>0$ is any constant
under scaling.

The sonic speed
of the flow for polytropic gas is
\begin{equation}\label{1.4aa}
c=\sqrt{\frac{\gamma p}{\rho}}.
\end{equation}
The flow is subsonic if $|\uu| < c$ and supersonic
if $|\uu|> c$.
For a transonic flow, both cases occur in the flow, and then system \eqref{Euler1}
is of mixed-composite hyperbolic-elliptic type, which consists
of two equations of mixed elliptic-hyperbolic type
and two equations of hyperbolic type.

System \eqref{Euler1} is a prototype of general nonlinear systems
of conservation laws:
\begin{equation}\label{Euler1a}
\nabla_\xx\cdot \mathbf{F}(U)=0,  \qquad \xx\in \R^n,
\end{equation}
where $U: \R^n\to \R^m$ is unknown, while $\mathbf{F}: \R^m\to \MM^{m\times n}$
is a given nonlinear mapping for the $m\times n$ matrix space $\MM^{m\times n}$.
For \eqref{Euler1}, we may choose $U=(\uu, p, \rho)$.
The systems with form \eqref{Euler1a}
often govern time-independent solutions for multidimensional quasilinear
hyperbolic systems of conservation laws; {\it cf.} Lax \cite{Lax1,Lax2}.

It is well known that, for a steady upstream uniform supersonic
flow past a straight-sided wedge whose vertex angle is less than
the critical angle, there exists a shock-front emanating from the
wedge vertex so that the downstream state is either subsonic or
supersonic, depending on the downstream asymptotic condition at infinity
(see Appendix B and Fig. \ref{Figure-1} for the shock polar). The
study of two-dimensional steady uniform supersonic flows past a
straight-sided wedge can date back to the 1940s ({\it cf}.
Courant-Friedrichs \cite{CF};
also see Prandtl \cite{Prandtl} and von Neumann \cite{Neumann}).

For the case of supersonic-supersonic
shock ({\it i.e.} both states of the shock are supersonic),
local solutions
around the curved wedge vertex were first constructed by Gu
\cite{Gu}, Li \cite{Li}, Schaeffer \cite{Schaeffer}, and the
references cited therein. Global potential solutions are constructed
in \cite{Chen1,Chen2,CF,Zh2}
when the wedge has certain convexity or the wedge is a small
perturbation of the straight-sided wedge with fast decay in the flow
direction.
Furthermore, the stability and uniqueness of entropy solutions in $BV$
containing the strong supersonic-supersonic shock
were established in Chen-Li \cite{ChenLi}.

For the case of supersonic-subsonic shock ({\it i.e.} transonic
shock-front), the stability of these fronts under a perturbation of
the upstream flow, or a perturbation of wedge boundary, has been studied
in Chen-Fang \cite{ChenFang} for the potential flow and in Fang \cite{Fang}
for the Euler flow with a uniform Bernoulli constant.
In particular, the stability of
transonic shocks in the steady Euler flows with a uniform Bernoulli constant
was first established in the weighted Sobolev norms in Fang \cite{Fang},
even though the downstream asymptotic decay rate of the shock slope
at infinity was not derived.

In this paper, one of our main objectives is to deal with
the asymptotic behavior of
steady transonic flows with a transonic shock past a curved wedge
for the full Euler equations, especially the uniform decay rate
of the transonic shock slope and the subsonic flows downstream at infinity.
For a fixed uniform supersonic state $U_0^-$, there is an arc on the
shock polar corresponding to the subsonic states; see Fig. \ref{Figure-1}.
When the wedge angle is less than
the critical angle $\theta_{\rm w}^{\rm c}$,
the tangential point $T$ corresponding to the critical angle
divides arc $\wideparen{HS}$ into the two open arcs
$\wideparen{TS}$ and $\wideparen{TH}$.
The nature of these two cases is very different.
In this paper, we focus mainly
on the stability of transonic shocks in the important physical regime
$\wideparen{TS}$ when the wedge angle is
between the sonic angle $\theta_{\rm w}^{\rm s}$ and
the critical angle $\theta_{\rm w}^{\rm c}>\theta_{\rm w}^{\rm s}$.

To achieve this, we first rewrite the problem in Lagrangian coordinates
so that the original streamlines in Eulerian
coordinates become straight lines and the curved wedge boundary in
Eulerian coordinates becomes a horizontal ray in
Lagrangian coordinates. Then we exploit one of the new equations to
identify a potential function $\phi$ in Lagrangian coordinates.
By capturing the conservation properties of the Euler system,
we derive a single second-order nonlinear elliptic equation
for the potential function $\phi$ in the
subsonic region as in
\cite{CCF},
so that
the original transonic shock problem is
reformulated as a one-phase free boundary problem
for a second-order nonlinear elliptic equation with the shock-front as
a free boundary.
One of the advantages of this approach is
that, given the location of the shock-front,
or equivalently the entropy function
$A$ (which is constant along the fluid trajectories) along the shock-front
downstream,
all the physical variables $U=(\uu, p, \rho)$ can be expressed as functions of
the gradient of $\phi$, and the asymptotic behavior $\phi^\infty$ of
the potential $\phi$ at the infinite exit can be uniquely
determined.

To solve the free boundary problem, we have to determine the
free boundary, and both the subsonic phase and entropy function
defined in the downstream
domain with the free boundary as a part of its boundary.
We approach
this problem by employing the hodograph transformation to transfer the
free boundary to a fixed boundary, while keeping the ellipticity of the
second order partial differential equations, and then by updating the
entropy function $A$ to prove that the updating map
for $A$ has a fixed point.

For given entropy function $A$, we first determine {\it a priori}
the limit function of the
potential function downstream at infinity. Then we solve the
second-order elliptic equations for the potential function in the
unbounded domain with the fixed boundary conditions
and the downstream asymptotic condition at infinity.
This is achieved through the fixed
point argument by designing an appropriate map.
In order to define this map,
we first linearize the second-order elliptic equation for the
identified potential function $\phi$ based on the limit function
$\phi^\infty$ of $\phi$, solve the linearized problem
in the fixed region, and then make delicate estimates of the
solutions, especially the corner singularity near the intersection
between the fixed shock-front and the wedge boundary. These
estimates allow us to prove that the map has a fixed point
that is the subsonic solution in the
downstream domain. Finally, we prove that the entropy function $A$ is a
fixed point via the implicit function theorem.

Since the transformation between the Eulerian and Lagrangian
coordinates is invertible, we obtain the existence and uniqueness of
solutions of the wedge problem in Eulerian coordinates by
transforming back the solutions in Lagrangian coordinates,
which are
the real subsonic phase for the free boundary problem. The
asymptotic behavior of solutions at the infinite exit is also
clarified. The stability of transonic shocks and corresponding
transonic flows is also established by both employing the
transformation from Eulerian to Lagrangian coordinates
and developing careful, detailed estimates of the solutions.

Another advantage in our analysis here is in the context of the real
full Euler equations so that the solutions do not necessarily obey
Bernoulli's law with a {\it uniform} Bernoulli constant, {\it i.e.},
the Bernoulli constant is allowed to change for different fluid
trajectories (in comparison with the setup in
\cite{schen1,schen2,schen-yuan,Fang}).

By the closeness assumption of solution $U$ to the uniform flow
in the subsonic region, we obtain the asymptotic behavior of $U$ as
$y_1 \to \infty$. The asymptotic state
$U^{\infty} = (\uu^\infty, p^\infty, \rho^\infty)$ is uniquely
determined by state $U^-$ of
the incoming flow and the wedge angle at infinity.

We remark that,
when $U_0^+$ is on arc $\wideparen{TH}$ (see Fig. \ref{Figure-1} below),
the nature of the oblique boundary condition near the origin
is significantly different from the
case when $U_0^+$ is on arc $\wideparen{TS}$. Such a difference
affects the regularity of solutions at the origin in general.
It requires a further understanding of global features of the problem,
especially the global relation between the regularity near
the origin and the decay of solutions at infinity,
to ensure the existence of a $C^{1,\alpha}$ solution.
A different approach may be required to handle this case,
which is currently under investigation.

The organization of this paper is as follows: In \S 2, we first
formulate the wedge problem into a free boundary problem
and state the main theorem.

In \S 3, we reduce the Euler system into a second-order
nonlinear elliptic equation in the subsonic region
and then reformulate the wedge problem
into a one-phase free boundary problem
for the second-order nonlinear elliptic equation
with the shock-front as a free boundary.

In \S 4,  we use the hodograph transformation to make the free boundary
into a fixed boundary, in order to reduce the difficulty of the free
boundary. After that, we only need to solve for the unknown entropy
function $A$ as a fixed point.

In \S 5, for a given entropy function $A$,
we solve the reformed fixed boundary
value problem in the unbounded domain and determine {\it a priori}
the downstream asymptotic function of the potential function at infinity.
Then, in \S 6, we prove that the entropy function $A$ is a
fixed point via the implicit function theorem,
which is one of the novel ingredients in this paper.

In \S 7, we determine the decay of the solution to the asymptotic
state in the physical coordinates.

In \S 8, we establish the stability of the transonic solutions
and transonic shocks under small perturbations of the incoming flows
and wedge boundaries. We finally give some remarks for the problem
when the downstream state of the background solution is on arc
$\wideparen{TH}$ in \S 9. In Appendices, we show two comparison
principles and derive a criterion for different arcs
$\wideparen{TS}$ and $\wideparen{TH}$ on the shock polar, which are
employed in the earlier sections.

Finally, we remark that the stability of conical shock-fronts in three-dimensional flow
has also been studied in the recent years.
The stability of conical supersonic-supersonic shock-fronts
has been studied in Liu-Lien \cite{LL} in the class of BV solutions
when the cone vertex angle is small, and Chen \cite{Chen2003}
and Chen-Xin-Yin \cite{CXY} in the
class of smooth solutions away from the conical shock-front when the perturbed
cone is sufficiently close to the straight-sided cone.
The stability of three-dimensional conical transonic shock-fronts
in potential flow has been established in Chen-Fang \cite{CFang}
with respect to the conical perturbation
of the cone boundary and the upstream flow in appropriate function spaces.
Also see Chen-Feldman \cite{CFeldman1}.

\section{Mathematical Setup and the Main Theorem}

In this section, we first formulate the wedge problem into
a free boundary problem for the composite-mixed Euler equations,
and state the main theorem.

As is well-known, for a uniform horizontal incoming flow
$U_0^-=(u_{10}^-,0, p_0^-, \rho_0^-)$ past a straight wedge with
half-wedge angle $\theta_0$, the downstream constant flow can be
determined by the Rankine-Hugoniot conditions, that is, the shock
polar (see Appendix B and Fig.  \ref{Figure-1}).  According to the shock polar, the
two flow angles are important: One is the critical angle $\theta_{\rm w}^{\rm c}$
that ensures the existence of the attached shocks at the wedge
vertex, and the other is the sonic angle $\theta_{\rm w}^{\rm s}$ for which the
downstream fluid velocity at the sonic speed in the direction.
When the straight wedge angle $\theta_{\rm w}$ is between
$\theta_{\rm w}^{\rm s}$ and $\theta_{\rm w}^{\rm c}$,
there are two subsonic solutions; while the wedge angle
$\theta_{\rm w}$ is smaller than $\theta_{\rm w}^{\rm s}$,
there are one subsonic
solution and one supersonic solution.
We focus on the subsonic
constant state $U_0^+=(\uu_{0}^+, p_0^+, \rho_0^+)$
with $\uu_0^+\cdot (\sin\theta_0, -\cos\theta_0)=0$.
Then the transonic shock-front $\mathcal{S}_0$ is also straight,
described by $x_1= s_0 x_2$.
The question is whether the transonic shock
solution is stable under a perturbation of the incoming supersonic
flow and the wedge boundary.

Assume that the perturbed incoming flow $U^-$ is close to $U_0^-$,
which is supersonic and almost horizontal, and the wedge is close
to a straight wedge. Then, for any suitable wedge angle (smaller
than a critical angle), it is expected that there should be a shock-front
which is attached to the wedge vertex. If the subsonicity
condition is imposed in the far field downstream after the shock-front,
then the flow $U$ between the shock-front and the wedge should be subsonic.
Since the upper and lower subsonic regions do not interact with each other,
it suffices to study the upper part.

We now use a function $b(x_1)$ to describe the wedge boundary:
\begin{equation}\label{wall}
\partial\mathcal{W}=\{\xx\in \R^2\,:\,  x_2= b(x_1),\  b(0)=0\}.
\end{equation}

Along the wedge boundary $\partial\mathcal{W}$, the slip condition is
naturally prescribed:
\begin{equation}\label{slipcon}
\left. \frac{u_2}{u_1}\right|_{\partial\mathcal{W}} = b'(x_1).
\end{equation}
Let the shock-front $\mathcal{S}$ be $x_1 = \sigma(x_2)$ with
$\sigma(0)=0$. Then the domain for the subsonic flow is denoted by
\begin{equation}\label{domain:1}
\Omega_{\mathcal{S}}=\{\xx\in \R^2\,:\,  x_1> \sigma(x_2),\,  x_2 > b(x_1)\},
\end{equation}
and the shock-front $\mathcal{S}$ becomes a free boundary connecting the
subsonic flow (elliptic) with the supersonic flow (hyperbolic).

To be a weak solution of the Euler equations \eqref{Euler1},  the
Rankine-Hugoniot conditions should be satisfied along the shock-front:
\begin{equation} \label{con-RH}
\begin{cases}
[\,\rho u_1\,]=\sigma'(x_2) [\,\rho u_2\,],\\[1mm]
[\,\rho u_1^2 + p\,]= \sigma'(x_2) [\,\rho u_1 u_2\,],\\[1mm]
[\,\rho u_1 u_2\,] =  \sigma'(x_2) [\,\rho {u_2}^2 + p\,],\\[1mm]
[\,\rho u_1(E+\frac{p}{\rho})\,] = \sigma'(x_2) [\,\rho u_2
(E+\frac{p}{\rho})\,],
\end{cases}
\end{equation}
as the free boundary conditions on $\mathcal{S}$, where $[\, \cdot\, ]$ denotes
the jump between the quantity of two states across the shock-front.

\smallskip
For a fixed uniform supersonic state $U_0^-$, there is an arc on the
shock polar corresponding to the subsonic states.
When the wedge angle is less than
the critical angle $\theta_{\rm w}^{\rm c}>\theta_{\rm w}^{\rm s}$,
the tangential point $T$ corresponding to the critical angle
divides arc $\wideparen{HS}$ into the two open arcs
$\wideparen{TS}$ and $\wideparen{TH}$.
The nature of these two cases is very different.

\begin{figure}
 \centering
\includegraphics[height=65mm]{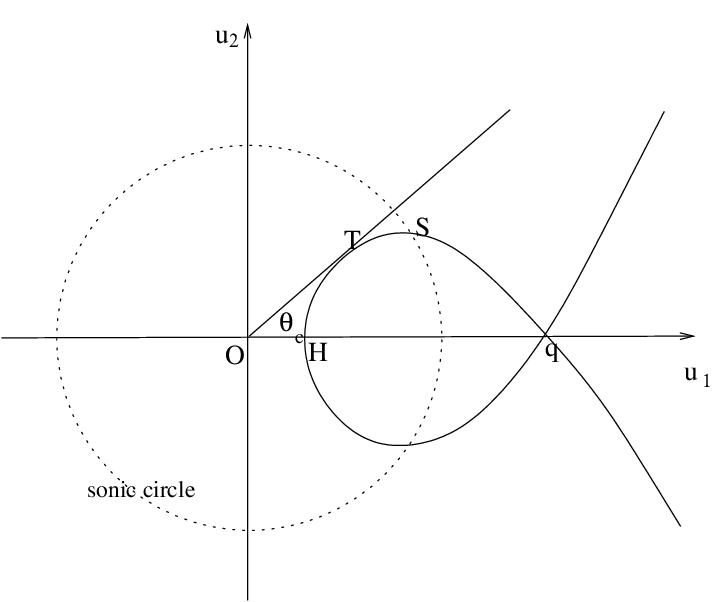}
\caption[]{Two arcs $\wideparen{TS}$ and $\wideparen{TH}$ on the
shock polar}\label{Figure-1}
\end{figure}

\smallskip
In this paper, we analyze the existence, stability, and asymptotic
behavior of steady transonic flows with a transonic shock in the
important regime  $\wideparen{TS}$ for the wedge angle $\theta_{\rm w}$.
To state our results,  we need the following weighed H\"older norms:
For any $\xx, \xx' $ in a two-dimensional domain $E$ and for a
subset $P$ of $\po E$, define
\begin{eqnarray*}
&\gd_\xx := \min\left\{ \mbox{dist}(\xx,P)  , 1\right\},
\qquad &\gd_{\xx,\xx'} :=\min\{\gd_\xx, \gd_{\xx'}\},\\
&\gD_\xx :=|\xx|+1,
\qquad\,\,\,\qquad &\gD_{\xx, \xx'}:= \min\{|\xx|+1, |\xx'|+1\}\\
&\widetilde{\gD}_{\xx} := \textrm{dist}(\xx,P)+1,
\qquad\,\,\,\qquad &\widetilde{\gD}_{\xx,\xx'}:= \min\{\widetilde{\gD}_{\xx}, \widetilde{\gD}_{\xx'}\}.
\end{eqnarray*}
Let $\ga \in (0,1)$, $\sigma, \tau, l \in \mathbb{R}$, and let $k$ be
a nonnegative integer. Let $\kk = (k_1, k_2)$ be an integer-valued
vector, where $k_1, k_2 \ge 0$, $|\kk|=k_1 +k_2$, and $D^{\kk}=
\po_{x_1}^{k_1}\po_{x_2}^{k_2}$. We define
\begin{eqnarray}
\nonumber
 &&[ f ]_{k,0;(\tau, l);E}^{(\sigma);P}
  := \sup_{\begin{subarray}{c}
\xx\in E\\
 |\kk|=k
\end{subarray}}\big(\gd_{\xx}^{\max\{k+\sigma,0\}} \gD_\xx^{\tau}\widetilde{\gD}_{\xx}^{l+k}  |D^\kk f(\xx)|\big), \\
 && {[ f ]}_{k,\ga;(\tau,l);E}^{(\sigma);P}
 := \sup_{
\begin{subarray}{c}
   \xx, \xx'\in E\\
 \xx \ne \xx', |\kk|=k
 \end{subarray}}
\Big(\gd_{\xx,\xx'}^{\max\{k+\ga+\sigma,0\}}
\gD_{\xx,\xx'}^{\tau}\widetilde{\gD}_{\xx,\xx'}^{l + k +\ga}\frac{|D^\kk f(\xx)-D^\kk f(\xx')|}{|\xx-\xx'|^\ga}\Big),
 \nonumber\\
&& \|f\|_{k,\ga;(\tau,l); E}^{(\sigma);P}
:= \sum_{i=0}^k {[f]}_{i,0 ;(\tau,l);E}^{(\sigma);P}
+ {[f]}_{k,\ga;(\tau,l);E}^{(\sigma);P}.
\label{def-norm}
\end{eqnarray}

For a vector-valued function $\ff=(f_1, f_2, \cdots,f_n )$, we
define
\[
\|\ff\|_{k,\ga;(\tau,l);E}^{(\sigma);P}
:=\sum_{i=1}^n  \|f_i\|_{k,\ga;(\tau,l);E}^{(\sigma);P}.
\]

For a function of one variable defined on $(0,\infty)$, we define
the H\"older norms with a weight at infinity. The definition
above can be reduced to one-dimensional if we keep only the weights
at  infinity. Then the notation becomes
$\|f\|_{k,\ga;(\tau);(0,\infty)}$.

We also need the norms with weights at infinity which apply only
for the derivatives:
\begin{equation}\label{normprime}
\|f\|_{k,\ga;(\tau,l);E}^{*, (\sigma);P}
:= \|f\|_{C^0(E)}+
   \|D f\|_{k-1,\ga;(\tau,l+1);E}^{(\sigma +1);P}.
\end{equation}
Similarly, the H\"{o}lder norms for a function of one variable
on $(0,\infty)$ with only the weights at infinity
are denoted by $\|f\|_{k,\ga;(\tau);(0,\infty)}^{*}$.

In terms of supersonic flows, we prescribe the initial data:
$$
U|_{\mathcal{I}}=U_0(x_2) \qquad\mbox{on}\,\,\,\, \mathcal{I}:=\{x_1 =0\}.
$$
Let $\Om^- $ be the domain for the incoming flows defined by
\begin{equation}\label{}
\Om^- = \left\{\xx\,:\, 0 <x_1 < 2 s_0 x_2 \right\}.
\end{equation}
For a given shock $\mathcal{S}=\{x_1=\sigma(x_2)\}$, let
\begin{equation}\label{}
    \Om_{\mathcal{S}}^- = \left\{\xx\,:\, 0 <x_1 < \sigma(x_2)
\right\}.
\end{equation}
We now fix parameters $\ga, \gb \in (0,1)$ with suitably small $\gb$, depending on the
background states.

\smallskip
Then we can conclude that
there is $\varepsilon>0$, depending on the background states,
such that, when
\begin{equation} \label{Uinitial}
 \|U_0-U^-_0\|_{2,\ga;(1+\gb) ;\mathcal{I}}< \varepsilon \qquad\mbox{for some $\beta>0$},
\end{equation}
there exists a constant $C_0>0$, independent of
$\vae$,  and a unique supersonic solution
$U^-=(\uu^-,p^-,\rho^-)(x,y)$ of system \eqref{Euler1} with the
initial condition $U^-|_{\mathcal{I}} = U_0$, well defined on $\Omega^-$,
such that
\begin{eqnarray}
\|U^--U^-_0\|_{2,\alpha;(1+\gb);\Omega^-}\le C_0 \|U_0 -
U_0^-\|_{2,\ga;(1+\gb);\mathcal{I}}. \label{supersonic:1}
\end{eqnarray}

This can be achieved by rewriting the problem
as an initial-boundary value problem
in the polar coordinates $(r, \theta)$
so that
system \eqref{Euler1}
is still a hyperbolic system,
domain $\Om^-$ becomes a half strip with $\theta$ time-like
and $r$ space-like, the initial data is
on $\{r>0, \theta=0\}$,
and the boundary data $v=0$ is on the
characteristic line
$\{r=0, 0\le \theta\le \arctan(2 s_0)\}$.
This is a standard initial-boundary value problem
whose almost-global existence of solutions can be obtained
as long
as $\vae$ is sufficiently small.

Assume that the wedge boundary satisfies
\begin{equation}\label{con-wallTS}
   \|b - b_0\|_{1,\ga;(\gb);\R_+}^{*} < \vae.
\end{equation}

\begin{theorem}[Main Theorem]\label{thm-mainTS}
Let the background solution $\{U_0^-, U_0^+\}$ satisfy that $U_0^+$ is
on arc $\wideparen{TS}$ in Fig. {\rm \ref{Figure-1}} for the straight
wedge boundary $x_2=b_0(x_1)=\tan\theta_0\, x_1, x_1>0$.
Then there is $\varepsilon>0$ such
that, when the initial data $U_0$ and the wedge boundary
$\partial \mathcal{W}=\{x_2=b(x_1), b(0)=0\}$
satisfy
\eqref{Uinitial} and \eqref{con-wallTS} respectively, there exist a
strong transonic shock $\mathcal{S}:=\{x_1= \sigma(x_2)\}$, a transonic
solution $\{U^-, U\}$ of the Euler equations \eqref{Euler1}
in $\Omega_{\mathcal{S}}$,
and an asymptotic downstream state
$U^\infty=(\uu^\infty, p_0^+, \rho^\infty)=V^\infty(x_2-\tan\theta_0\,x_1)$
for an appropriate function $V^\infty: [-\ve , \infty)\to \R^4$
with $\uu^\infty\cdot(\sin \theta_0, -\cos\theta_0)=0$
for the wedge angle $\theta_0$ such that

\smallskip
{\rm (i)} $U^-$ is a supersonic flow in $\Om^-_{\mathcal{S}}$, and $U$ is a
subsonic solution in $\Om_{\mathcal{S}}$;

\smallskip
{\rm (ii)} The Rankine-Hugoniot conditions \eqref{con-RH} hold along
the shock-front $\mathcal{S}$;

\smallskip
{\rm (iii)} The slip condition \eqref{slipcon} holds along the wedge
boundary $\partial\mathcal{W}$;

\smallskip
{\rm (iv)}
The following estimates hold:
\begin{eqnarray}
&& \|U^--U^-_0\|_{2,\alpha;(1+\gb,0);\Omega_{\mathcal{S}}^-}
 +\|U - U^+_0\|_{1, \ga;(0,\gb+1);\Om_{\mathcal{S}}}^{(-\ga); \partial\mathcal{W}}  +\|U - U^\infty\|_{0, \ga;(\gb,1);\Om_{\mathcal{S}}}
\nonumber \\
&&\quad +\|\sigma'(\cdot)- s_0\|^{*, (-1-\ga);\{0\}}_{2,\ga;(\gb);\R_+}
 +\|V^\infty-U_0^+\|_{0,\alpha; (1+\beta);[-\ve,\infty)}
\nonumber\\
&&\le C\left( \|U_0- U^-_0\|_{2,\ga;(1+\gb,0);\mathcal{I}} +
\|b-b_0\|_{1,\ga;(\gb);\R_+}^{*}\right), \label{est-U-per2}
\end{eqnarray}
where $C$ is a constant depending only on $U^0_{\pm}$, but
independent of $\ve$.

Moreover,  solution $U$ is unique within the class of
transonic solutions such that
the left-hand side of estimate \eqref{est-U-per2} is less
than $C\vae$.
\end{theorem}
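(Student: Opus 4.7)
The plan is to follow the roadmap already sketched in the introduction: reduce the full Euler system in the subsonic region to a single second-order nonlinear elliptic equation for a potential function, reformulate the transonic shock as a one-phase free boundary problem, straighten the shock by a hodograph transformation, solve the resulting fixed boundary problem in Lagrangian--hodograph coordinates for a prescribed entropy $A$, and close by a fixed point argument in $A$. The supersonic part is already settled by \eqref{supersonic:1}, so the work concentrates on constructing $U$ in $\Om_{\mathcal{S}}$ and determining the free boundary $\mathcal{S}$. First I would pass to Lagrangian coordinates $(y_1,y_2)$ so that streamlines become horizontal lines and the wedge boundary becomes the positive $y_1$-axis; the entropy and Bernoulli quantity are then constant along streamlines and depend only on $y_2$. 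One of the transformed equations is a closed curl-type identity, yielding a potential $\phi$ whose gradient encodes $(\uu,p,\rho)$ once the entropy profile $A(y_2)$ downstream of $\mathcal{S}$ is fixed; substitution gives a quasilinear elliptic equation $\mathcal{N}(\phi,A)=0$, the slip condition becomes a homogeneous Neumann-type condition on $\{y_2=0\}$, and the Rankine--Hugoniot jumps become a nonlinear oblique condition along the free shock.

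Second, to remove the free boundary I would apply a partial hodograph transformation that maps the shock to a vertical line, exploiting strict monotonicity of $\phi_{y_2}$ across the straight-wedge background (inherited from the fact that $U_0^+$ lies on the regime $\wideparen{TS}$, where the background problem is uniformly elliptic and the oblique boundary condition is strictly oblique). For a given admissible $A$, the asymptotic potential $\phi^\infty$ as $y_1\to\infty$ is determined a priori by $A$, the upstream state $U^-$, and the downstream wedge angle through the Rankine--Hugoniot and slip relations; this one-dimensional limit problem provides the far-field data. I would then linearize the straightened equation around the background potential $\phi_0$, solve the linear oblique boundary value problem on the fixed semi-infinite corner domain by a Schauder fixed point iteration in the weighted H\"older spaces \eqref{def-norm}, and finally invert the hodograph map to recover $\phi=\phi[A]$.

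The main obstacle -- and the heart of the argument -- is to derive Schauder-type estimates in the weighted H\"older norms $\|\cdot\|_{k,\ga;(\tau);E}^{(\sigma);P}$. The weights have to simultaneously absorb the corner singularity at the origin where the fixed shock meets the wedge (the two oblique conditions generally fail to be smoothly compatible there), produce the uniform decay rate as $|y|\to\infty$ embodied in the bound \eqref{est-U-per2}, and be strong enough to close a contraction argument. Matching the corner exponent $-\ga$ with the decay exponent $1+\gb$ for an oblique elliptic operator on a quarter-plane-type domain is the most delicate point and ultimately constrains the admissible ranges of $\ga$ and $\gb$. The desired estimates will follow by combining interior and boundary Schauder theory on dyadic rescalings with a separate asymptotic analysis near infinity driven by the one-dimensional limit $\phi^\infty$.

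Finally, once $\phi[A]$ has been constructed for each admissible $A$, the Rankine--Hugoniot conditions determine an updated entropy $\tilde A$ from the trace of $\nabla\phi[A]$ on the straightened shock. Using the estimates from the previous step together with the smoothness of the shock polar, I would verify that the linearization of $A\mapsto\tilde A$ at the background is invertible, so that the implicit function theorem produces a unique fixed point $A$ in a small ball. Inverting the Lagrangian and hodograph transformations, both of which are near-identity under \eqref{Uinitial}--\eqref{con-wallTS}, yields $(U,\sigma)$ in the Eulerian picture satisfying (i)--(iv) and the quantitative bound \eqref{est-U-per2}. Uniqueness within the class for which the left-hand side of \eqref{est-U-per2} is less than $C\vae$ then follows by running the same weighted estimates on the difference of two such solutions, using that the combined map $A\mapsto\tilde A$ is a contraction in that ball.
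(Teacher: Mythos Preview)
Your outline matches the paper's own strategy essentially step for step: Lagrangian coordinates, potential reduction to a scalar quasilinear elliptic equation, partial hodograph transformation to fix the shock, Schauder fixed point for $\varphi$ at a given entropy profile $A$, and the implicit function theorem to close in $A$, all carried out in the weighted H\"older spaces with corner weight at the origin and decay weight at infinity.

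Two small technical corrections relative to the paper's execution. First, the slip condition on the wedge becomes a \emph{Dirichlet} condition $\phi(y_1,0)=b(y_1)$ in Lagrangian coordinates (since $\phi_{y_1}=u_2/u_1$ integrates to $b$), not a Neumann-type condition; this is what produces the mixed Dirichlet--oblique corner problem. Second, the linearization in the fixed-boundary problem is taken around the $A$-dependent asymptotic state $\varphi^\infty$ (determined a priori from $A$, $B$, and $\theta_0$), not around the straight-wedge background $\varphi_0$; linearizing around $\varphi^\infty$ is what makes the right-hand side decay like $r^{-1-\beta}$ and allows the barrier $r^{-\beta}\sin(\alpha\theta+\tau)$ to close the $C^0$ estimate. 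The role of the $\wideparen{TS}$ hypothesis is precisely that it forces $\nu_2<0$ in the oblique condition on the straightened shock, which is what makes the corner barrier and the weighted Schauder estimates go through; it is not used to justify the hodograph map itself (that comes from $[\phi_{y_1}]\ne 0$).
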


\begin{remark}
Estimate \eqref{est-U-per2} implies that the
downstream flow and the transonic shock-front
are close to the background transonic solution.
The subsonic solution $U$ converges to $U^\infty$ at  rate $|\xx|^{-\beta}$ and the slope of the shock converges to the slope of the background shock at rate
 $|\xx|^{-\beta -1}$.
\end{remark}

\begin{remark}
Theorem {\rm \ref{thm-mainTS}}
indicates that
the asymptotic downstream state $U^\infty$ generally is not a uniform constant state.
If the $\xx$-coordinates are rotated with angle $\theta_0$ into the new coordinates $(\hat{x}_1, \hat{x}_2)$
so that the unperturbed wedge boundary $\partial\mathcal{W}_0=\{x_2-\tan\theta_0\, x_1\}$ becomes the $\hat{x}_1$-axis:
$$
(\hat{x}_1, \hat{x}_2)
=(\cos\theta_0\, x_1+\sin\theta_0\, x_2, -\sin\theta_0\, x_1+\cos\theta_0\, x_2),
$$
then $V^\infty=V^\infty(\hat{x}_2)$.
In Lagrangian coordinates, $\yy=(y_1, y_2)$, determined by \eqref{def-coord} in \S 3,
the asymptotic downstream state is a function of $y_2$ in general:
$U^\infty=U^\infty(y_2)=(\uu^\infty(y_2), p_0^+, \rho^\infty(y_2))$.
However, our argument also shows that, in the isentropic case with a constant Bernoulli quantity $B$ (see \eqref{eqn-bernoulli}),
the asymptotic state must be uniform and equal to the background state.
Also see Chen-Chen-Feldman \cite{CCF}.
\end{remark}

\section{Reduction of the Euler System and Reformulation of the Wedge Problem}

In this section, we first reduce the Euler system into a
second-order nonlinear elliptic equation and then reformulate the
wedge problem into a free boundary problem for the nonlinear
elliptic equation with the shock-front as the free boundary.

From the first equation in \eqref{Euler1}, there exists a unique
stream function $\psi$ in domain $\Om^- \cup \Om_{\mathcal{S}}$ such that
$$
\nabla \psi= (-\rho u_2, \rho u_1)
$$
with $\psi(\mathbf{0})=0$.

To simplify the analysis, we employ the following coordinate
transformation to the Lagrangian coordinates:
\begin{equation}\label{def-coord}
    \left\{
\begin{array}{lll}
y_1=x_1,\\[1mm]
y_2= \psi(x_1, x_2),
\end{array}
    \right.
\end{equation}
under which the original curved streamlines become straight. In
the new coordinates $\yy=(y_1, y_2)$, we still denote the unknown
variables $U(\xx(\yy))$ by $U(\yy)$ for simplicity of notation.

The original Euler equations in \eqref{Euler1} become the
following equations in divergence form:
\begin{eqnarray}\label{eqn-euler1}
  &&\big(\frac{1}{\rho u_1}\big)_{y_1}
    -\big(\frac{u_2}{u_1} \big)_{y_2}= 0,\\[1mm]
  &&\big( u_1 + \frac{p}{\rho u_1}\big)_{y_1}
   - \big( \frac{p u_2}{u_1}\big)_{y_2}= 0, \label{eqn-euler2}\\[1mm]
  &&(u_2)_{y_1} + p_{y_2}= 0,\label{eqn-euler3} \\[1mm]
  &&\big( \frac{1}{2}|\uu|^2 + \frac{\gc p}{(\gc-1)\rho} \big)_{y_1}=
  0.\label{eqn-euler4}
\end{eqnarray}

Let $\mathcal{T}: y_1= \hat{\sigma}(y_2)$ be a shock-front. Then,
from the above equations, we can derive the Rankine-Hugoniot
conditions along $\mathcal{T}$:
\begin{eqnarray}\label{con-RH1}
 &&\big[\frac{1}{\rho u_1}\big]=-\big[\frac{u_2}{u_1} \big] \hat{\sigma}'(y_2),\\[1mm]
 && \big[ u_1 + \frac{p}{\rho u_1}\big]=-\big[\frac{p u_2}{u_1}\big]
 \hat{\sigma}'(y_2),
  \label{con-RH2}\\[1mm]
&&[\,u_2 \,]= [\,p\,] \hat{\sigma}'(y_2),
\label{con-RH3}\\[1mm]
&& \big[ \frac{1}{2}|\uu|^2 + \frac{\gc p}{(\gc-1)\rho}\big]= 0.
\label{con-RH4}
\end{eqnarray}

The background shock-front now is $\mathcal{T}_0: y_1 = s_1 y_2$, with
$\frac{1}{s_1} = \rho_0^+ u_{10}^+(\frac{1}{s_0}- \tan \theta_0)>0$.
Without loss of generality, we assume that the supersonic solution
$U^-$ exists in domain $\D^-$ defined by
\begin{equation}\label{def-super-D}
    \D^- = \left\{\yy\,:\, 0 <y_1 < 2 s_1 y_2 \right\}.
\end{equation}

For a given shock function $\hat{\sigma}(y_2)$, let
\begin{eqnarray}
\D^-_{\hat{\sigma}}
&=&\left\{\yy: 0 <y_1 <\hat{\sigma}(y_2)\right\},
\label{3.11a}\\[1mm]
\D_{\hat{\sigma}}
&=& \left\{\yy: 0 <y_2 , \hat{\sigma}(y_2)< y_1
\right\}. \label{3.12a}
\end{eqnarray}

In either the supersonic or subsonic region,
$x_2$ can be solved as a function of $\yy$
since $\psi_{x_2} =\rho u_1 \ne 0$.
Let $x_2:= \phi(\yy)$ in the subsonic region $\D_{\hat{\sigma}}$ and
$x_2:=\phi^-(\yy)$ in the supersonic
region $\D^-_{\hat{\sigma}}$.
Given $U^-$, we can find the corresponding function $\phi^-$.
We now use function $\phi(\yy)$ to reduce the original Euler system to an elliptic
equation in the subsonic region.

By the definition of coordinate transformation \eqref{def-coord},
we have
\begin{equation}\label{eqn-phi-deri}
    \phi_{y_1} = \frac{u_2}{u_1} , \quad \phi_{y_2} = \frac{1}{\rho
    u_1},
\end{equation}
that is, $\phi(\yy)$ is the potential function of the vector field
$(\frac{u_2}{u_1}, \frac{1}{\rho u_1})$.

Equation \eqref{eqn-euler4} implies Bernoulli's law:
\begin{equation}\label{eqn-bernoulli}
\frac{1}{2}|\uu|^2 + \frac{\gc p}{(\gc-1)\rho} = B(y_2),
\end{equation}
where $B=B(y_2)$ is completely determined by the incoming flow $U^-$
at the initial position $\mathcal{I}$, because of the
Rankine-Hugoniot condition \eqref{con-RH4}.

From equations \eqref{eqn-euler1}--\eqref{eqn-euler4}, we find
\[
(\gc \ln \rho -\ln p)_{y_1} =0,
\]
which implies
\begin{equation}\label{eqn-rho-p}
    p= A(y_2) \rho^\gc \qquad\mbox{in the subsonic region $\D_{\hat{\sigma}}$}.
\end{equation}

\smallskip
With equations \eqref{eqn-phi-deri} and \eqref{eqn-rho-p}, we can
rewrite Bernoulli's law into the following form:
\begin{equation}\label{eqn-rho}
\frac{\phi_{y_1}^2 + 1 }{2 \phi_{y_2}^2} + \frac{\gc}{\gc
-1}A\rho^{\gc+1} = B \rho^2.
\end{equation}

In the subsonic region, $|\uu| < c:=\sqrt{\frac{\gc p}{\rho}}$. Therefore,
Bernoulli's law \eqref{eqn-bernoulli} implies
\begin{equation}\label{con-subsonic}
    \rho^{\gc-1} > \frac{2(\gamma-1)B}{\gamma(\gamma+1)A}.
\end{equation}

Condition \eqref{con-subsonic} guarantees that  $\rho$ can be
solved from \eqref{eqn-rho} as a smooth function of $(A,B,\grad
\phi)$.

\medskip
Assume that $A=A(y_2)$ has been known. Then $(\uu, p, \rho)$ can be
expressed as functions of $\grad \phi$:
\begin{equation}\label{eqn-express-U}
\rho = \rho(A, B, \grad \phi),\quad
\uu=(\frac{1}{\rho \phi_{y_2}}, \frac{\phi_{y_1}}{\rho \phi_{y_2}}),\quad  p=A\rho^\gc,
\end{equation}
since $B=B(y_2)$ is given by the incoming flow.

Similarly, in the supersonic region $\D^-$, we employ the corresponding
variables $( A^-, B, \phi^-)$ to replace $U^-$, where $B$ is the
same as in the subsonic region because of the Rankine-Hugoniot
condition \eqref{con-RH4}.

\smallskip
We now choose \eqref{eqn-euler3} to derive a second-order
nonlinear elliptic equation for $\phi$ so that the full Euler
system is reduced to this equation in the subsonic region.
Set
\begin{equation}\label{def-N}
N^1 =u_2,  \ \ \ N^2 =p .
\end{equation}
Then we obtain the {\it second-order nonlinear equation for $\phi$}:
\begin{equation}\label{eqn-N}
   (N^1)_{y_1} + (N^2)_{y_2}=0,
\end{equation}
where $N^i=N^i(A(y_2), B(y_2), \nabla\phi), i=1,2,$ are given by
\begin{eqnarray}\nonumber
N^1(A,B,\nabla\phi)&=&
\frac{\phi_{y_1}}{\phi_{y_2}\rho(A(y_2),B(y_2),\nabla\phi)},\\
N^2(A,B,\nabla\phi)&=&A(y_2)\rho(A(y_2),B(y_2),\nabla\phi)^\gamma
.\label{def-N12}
\end{eqnarray}

Let $q= \sqrt{u_1^2 + u_2^2}$. Then a careful calculation shows
that
\begin{eqnarray}
&&N^1_{\phi_{y_1}} =\frac{u_1 (c^2 - u_1^2)}{c^2 -q^2},\\
 \label{exp-N1phi2}&& N^1_{\phi_{y_2}} = N^2_{\phi_{y_1}} = -\frac{c^2\rho u_1u_2}{c^2 -q^2},\\
 && N^2_{\phi_{y_2}} = \frac{c^2\rho^2q^2u_1}{c^2 -q^2}.
\end{eqnarray}
Thus, the discriminant
\begin{equation}
N^1_{\phi_{y_1}} N^2_{\phi_{y_2}} -
N^1_{\phi_{y_2}}N^2_{\phi_{y_1}}= \frac{c^2\rho^2 u_1^2}{c^2 - q^2}
>0
\end{equation}
in the subsonic region with $\rho u_1\ne 0$.
Therefore, when solution $\phi$ is sufficiently close to $\phi_0^+$ (determined
by the subsonic background state $U_0^+$) in the $C^1$ norm,
equation \eqref{eqn-N} is uniformly elliptic, and the Euler system
\eqref{eqn-euler1}--\eqref{eqn-euler4} is reduced to the elliptic
equation \eqref{eqn-N} in domain $\D_{\hat{\sigma}}$,
where $\hat{\sigma}$ is the function for the transonic shock.

The boundary condition for $\phi$ on the wedge boundary $\{y_2 =
0\}$ is
\begin{equation}\label{Con-bd-phi-wall}
\phi (y_1, 0) = b(y_1).
\end{equation}

The condition on $\mathcal{T}$ is derived from the Rankine-Hugoniot
conditions \eqref{con-RH1}--\eqref{con-RH3}. Condition
\eqref{con-RH1} is equivalent to the continuity of $\phi$ across
$\mathcal{T}$:
\begin{equation}\label{continuity}
[\phi]|_{\mathcal{T}}=0.
\end{equation}
It also gives
\begin{equation}\label{for-tau-prime}
    \hat{\sigma}'(y_2)=-\frac{[\phi_{y_2}]}{[\phi_{y_1}]}(\hat{\sigma}(y_2), y_2).
\end{equation}

Replacing $\hat{\sigma}'(y_2)$ in \eqref{con-RH2} and
\eqref{con-RH3} with \eqref{for-tau-prime} gives rise to the
conditions on $\mathcal{T}$:
\begin{eqnarray}
&&G(U^-, A, \nabla \phi)\equiv
  [\phi_{y_1}]\Big[\frac{1}{\rho \phi_{y_2}}+A\rho^\gc \phi_{y_2}\Big]
   -[\phi_{y_2}][A\rho^\gc \phi_{y_1}] =0,  \label{eqn-G}\\[1mm]
&&H(U^-, A, \nabla\phi)\equiv
[\phi_{y_1}][N^1]+[\phi_{y_2}][N^2]=0.\label{eqn-H}
\end{eqnarray}

\smallskip
We will combine the above two conditions into the boundary
condition for \eqref{eqn-N} by eliminating $A$.

By calculation, we have
\begin{eqnarray}
&&N^1_A = \frac{\gc}{\gc -1}\frac{\rho^{\gc-1}u_2}{c^2 -q^2}, \\[1mm]
&& N^2_A = -\frac{\rho^\gc(q^2 + \frac{c^2}{\gc-1})}{c^2 -q^2}.
\end{eqnarray}
Thus, we obtain
\begin{eqnarray*}
  H_A &=& N^1_A [\phi_{y_1}] + N^2_A[\phi_{y_2}] \\
   &=& \frac{\gc}{\gc -1}\frac{\rho^{\gc-1}u_2}{c^2 -q^2}
   \Big[\frac{u_2}{u_1}\Big]
   - \frac{\rho^\gc(q^2 + \frac{c^2}{\gc-1})}
   {c^2 -q^2}\Big[\frac{1}{\rho u_1}\Big] \\
   &>& 0,
\end{eqnarray*}
and
\begin{eqnarray*}
G_A &=& [\phi_{y_1}]
  \Big(\frac{N^1_A}{\phi_{y_1}}
   + \phi_{y_2} N^2_A \Big)-[\phi_{y_2}]\phi_{y_1} N^2_A \\
  &=& \frac{u_2 \rho^\gc(q^2 + \frac{c^2}{\gc-1})}
   {u_1(c^2 -q^2)}\left[\frac{1}{\rho u_1}\right]
   - \frac{ \rho^{\gc -1}}{u_1(c^2-q^2)}
   \left({u_2}^2 + \frac{c^2 -u_1^2}{\gc -1}\right)\left[\frac{u_2}{u_1} \right]\\
   &<& 0,
 \end{eqnarray*}
since $[\frac{1}{\rho u_1}]<0$ and ${u_2}_-$ is close to $0$.

Therefore, both equations \eqref{eqn-G} and \eqref{eqn-H} can be solved
for $A$ to obtain
$A= g_1(U^-, \nabla \phi)$ and $A= g_2(U^-, \nabla\phi)$, respectively.
Then we obtain our desired condition
on the free boundary ({\it i.e.} the shock-front):
\begin{equation}\label{con-phi-shock}
\bar{g}(U^-,\nabla \phi):= (g_2 -g_1)(U^-,\nabla \phi) =0.
\end{equation}
Then the original transonic problem is reduced to the elliptic
equation \eqref{eqn-N} with the fixed boundary condition
\eqref{Con-bd-phi-wall} and
the free boundary conditions \eqref{continuity}
and \eqref{con-phi-shock},
and $A$ is determined through either of
\eqref{eqn-G}--\eqref{eqn-H}.

\section{Hodograph Transformation and Fixed Boundary Value Problem}

In order to reduce the difficulty of the free boundary, we employ the
hodograph transformation to make the shock-front into a fixed boundary.
After that, we only need to solve for the unknown function $A$.

\smallskip
We now extend the domain of $\phi^-$ from $\D^-$ to the first quadrant
$\D^- \cup \D_{\hat{\sigma}}$.
Let $\phi^-_0 = \frac{1}{\rho^-_0 u_{20}^-} y_2$, which is the
background potential function. We can extend $\phi^-$ into
$\D^- \cup \D_{\hat{\sigma}}$
such that
\begin{equation*}
\phi^- = \phi^-_0 \qquad  \mbox{when}\,\,\,
0 <2 s_1 y_2< y_1-1.
\end{equation*}

We then use the following partial hodograph transformation:
\begin{equation}\label{def-hodo}
 \left\{
\begin{array}{ll}
z_1=\phi - \phi^-,\\[1mm]
z_2= y_2,
\end{array}
    \right.
\end{equation}
so that $y_1$ is a function of
$(z_1, z_2)$: $y_1 = \varphi(z_1, z_2)$.

Let
\begin{eqnarray*}
  M^1 (U^-, A, \grad \phi)&=& N^1(A,B, \grad \phi)
  + N^2(A,B, \grad \phi)\frac{[\phi_{y_2}]}{[\phi_{y_1}]}, \\
  M^2 (U^-, A, \grad \phi)  &=&\frac{N^2(A,B, \grad
  \phi)}{[\phi_{y_1}]},
\end{eqnarray*}
and
  \begin{eqnarray*}\label{def-Mbar}
 && \overline{M}^i (\zz, A, \varphi, \grad \varphi)\\
 &&=-M^i\big(U^-(\varphi, z_2), A, \partial_{y_1}{\phi^-}(\varphi, z_2)
     +\frac{1}{\varphi_{z_1}}, \partial_{y_2}{\phi^-}(\varphi, z_2)
     -\frac{\varphi_{z_2}}{\varphi_{z_1}}\big), \quad i=1,2.
\end{eqnarray*}

Therefore, equation \eqref{eqn-N} becomes
\begin{equation}\label{eqn-Mbar}
   \big(\overline{M}^1 (\zz, A, \varphi, \grad \varphi)\big)_{z_1}
   +\big(\overline{M}^2 (\zz, A, \varphi, \grad \varphi)\big)_{z_2}=0.
\end{equation}
Notice that
\begin{eqnarray}
&&  \overline{M}^1_{\vph_{z_1}}  =  [\phi_{y_1}]^2
N^1_{\phi_{y_1}}
  + 2 N^1_{\phi_{y_2}} [\phi_{y_1}][
  \phi_{y_2}] + N^2_{\phi_{y_2}}[
  \phi_{y_2}]^2,\\[1mm]
&&\overline{M}^1_{\vph_{z_2}}  =  N^1_{\phi_{y_2}} [\phi_{y_1}]+
N^2_{\phi_{y_2}}[\phi_{y_2}]+ N^2, \label{exp-M12}\\[1mm]
&& \overline{M}^2_{\vph_{z_1}}  =  N^1_{\phi_{y_2}} [\phi_{y_1}]+
N^2_{\phi_{y_2}}[
  \phi_{y_2}]- N^2, \label{exp-M21}\\[1mm]
&& \overline{M}^2_{\vph_{z_2}}  =  N^2_{\phi_{y_2}}.
\end{eqnarray}
Also
\begin{eqnarray*}
\overline{M}^1_{\vph_{z_1}} \overline{M}^2_{\vph_{z_2}}  -
\frac{1}{4}\big(\overline{M}^1_{\vph_{z_2}}+ \overline{M}^2_{\vph_{z_1}}\big)^2=
[\phi_{y_1}]^2 \big(N^1_{\phi_{y_1}} N^2_{\phi_{y_2}} -
(N^1_{\phi_{y_2}})^2\big) >0,
\end{eqnarray*}
which implies that equation \eqref{eqn-Mbar} is uniformly elliptic, for any solution
$\varphi$ that is close to $\varphi_0^+$ (determined by the background solution $U_0^+$)
in the $C^1$ norm.

Then the unknown shock-front $\mathcal{T}$ becomes a fixed boundary, which
is the $z_2$-axis. Along the $z_2$-axis, condition
\eqref{con-phi-shock} is now
\begin{eqnarray}\label{con-phi-gt}
\tilde{g} (\zz, \varphi, \grad \varphi)\equiv
     \bar{g}\big(U^-(\varphi,z_2), \partial_{y_1}{\phi^-}(\varphi, z_2)
     +\frac{1}{\varphi_{z_1}}, \partial_{y_2}{\phi^-}(\varphi, z_2)
     -\frac{\varphi_{z_2}}{\varphi_{z_1}}\big)=0.
     \end{eqnarray}

We also convert condition \eqref{eqn-H} into the $\zz$-coordinates:
\begin{eqnarray}\label{con-Ht}
&&\widetilde{H}(\zz,A, \varphi, \grad \varphi)\nonumber\\
&&:= {H}(U^-(\varphi,z_2),A,
\partial_{y_1}{\phi^-}(\varphi, z_2)
     +\frac{1}{\varphi_{z_1}}, \partial_{y_2}{\phi^-}(\varphi, z_2)
     -\frac{\varphi_{z_2}}{\varphi_{z_1}})=0
\end{eqnarray}
along the $z_2$-axis.

\medskip
The condition on the $z_1$-axis can be derived from
\eqref{Con-bd-phi-wall} as follows: Restricted on $z_2 =0$, the
coordinate transformation \eqref{def-hodo} becomes
\begin{equation*}
    z_1 = b(y_1) - \phi_-(y_1, 0).
\end{equation*}
Then $y_1$ can be solved in terms of $z_1$ so that
\begin{equation}\label{con-phi-z1}
y_1 = \varphi(z_1, 0) = \widetilde{b}(z_1) .
\end{equation}

Let $Q$ be the first quadrant. Then the original wedge problem is
now reduced to both solving equation \eqref{eqn-Mbar} for $\varphi$ in
the unbounded domain $Q$ with the boundary conditions \eqref{con-phi-gt}
and \eqref{con-phi-z1} and determining $A$ via \eqref{con-Ht}.

\medskip
This will be achieved by the following fixed point arguments.
Consider a Banach space:
$$
X= \{\gl\,: \, \gl(0) =0, \|\gl\|_{1,\alpha; (1+\beta); (0, \infty)}^{(-\alpha); \{0\}}<\infty\}
$$
as defined in \eqref{def-space-x} below.
Then we define our iteration map $\mathcal{J}:  X\longrightarrow X$
through the following two steps:

\smallskip
{\bf 1.} Consider any $A=A(z_2)$ so that $A-w_t\in X$ satisfying
\begin{equation}\label{4.11-a}
\|A-A_0^+\|_{1,\alpha; (1+\beta); (0,\infty)}^{(-\alpha); \{0\}}\le C_0\varepsilon
\end{equation}
for some fixed constant $C_0>0$,
where $w_t=w_t(z_2)$ is determined by \eqref{6.2-a} below.
With this $A$, we solve equation \eqref{eqn-Mbar} for $\varphi=\varphi_A$ in
the unbounded domain $Q$ with the boundary conditions \eqref{con-phi-gt}
and \eqref{con-phi-z1} in a compact and convex set:
\begin{equation}\label{4.12-a}
\Sigma_\delta=\{\varphi :\, \|\varphi-\varphi^\infty\|_{2,\alpha;(\beta,0);Q}^{(-1-\alpha);\partial\mathcal{W}}\le \delta\}
\qquad\mbox{for sufficiently small $\delta>0$}
\end{equation}
in the Banach space:
\begin{equation}\label{4.13-a}
\mathcal{B} = \{\varphi  \,:\, \|\varphi-\varphi^\infty\|_{2,\ga';(\gb',0);Q}^{(-1-\ga');\partial\mathcal{W}} <
\infty\} \qquad \mbox{with $\,\,0< \ga'<\ga, \, 0<\beta'<\beta$},
\end{equation}
where $\varphi$ is determined by \eqref{eqn-varphi-inf}.  Equation \eqref{eqn-Mbar}
is uniformly elliptic for $\varphi  \in \Sigma_\gd$ for small $\delta>0$.
The existence of solution $\varphi_A \in \Sigma_\delta$ will be established
by the Schauder fixed point theorem in \S 5.

\smallskip
{\bf 2.} With this $\varphi=\varphi_A$, we solve \eqref{con-Ht} to obtain
a unique $\tilde{A}$ that defines $\mathcal{J}(A-w_t)=\tilde{A}-w_t$.

\medskip
Finally, by the implicit function theorem,
we prove that $\mathcal{J}$ has a fixed point $A-w_t$ in \S 6, for which $A$ satisfies
\eqref{4.11-a}.

\section{An Elliptic Problem to Determine $\varphi$ in Domain $Q$}
\label{sec-ell-pro}

In this section, for given $A$ satisfying \eqref{4.11-a},
we solve equation \eqref{eqn-Mbar}
for $\varphi$ in the unbounded domain $Q$ with boundary conditions
\eqref{con-phi-gt} and \eqref{con-phi-z1}. Before this, we determine {\it a priori}
the limit function $\varphi^\infty$ at infinity.

\subsection{Determine {\it a priori} the limit function $\varphi^\infty$  at infinity}
\label{Section-5.1}
First, we assume that the asymptotic downstream state $U^\infty$
depends only on $y_2$, which will be verified later.
Then we determine the limit function $\phi^\infty$ for $\phi$.
From \eqref{eqn-euler1}, we expect the flow direction at infinity
is the same as that of the
wedge. That is,
$$
\phi_{y_1} = \frac{u_2}{u_1} \to \tan \theta_0 =
\frac{u_{20}^+}{u_{10}^+} \qquad\,\,\,\mbox{as}\,\,\, y_1 \to \infty.
$$
Then
$$
\phi^\infty = \tan \theta_0\, y_1 + l(y_2).
$$
Replacing $\phi$ with $\phi^\infty$ in Bernoulli's law
\eqref{eqn-rho}, we obtain
\begin{equation*}
\frac{(\tan \theta_0)^2 + 1 }{2 {(l'(y_2))}^2}
+ \frac{\gc}{\gc-1}A\rho^{\gc+1} = B \rho^2.
\end{equation*}
From \eqref{eqn-euler3}, we expect that
pressure $p \to p^+_0$ and then
relation \eqref{eqn-rho-p} becomes
$ p^+_0 = A (\rho^\infty)^\gc $
so that $A=A(y_2)$
and $\rho^\infty(y_2)=(\frac{p_0^+}{A(y_2)})^{1/\gamma}$.
Therefore, the above equation
becomes
\begin{equation}\label{5.1a}
\frac{(\tan \theta_0) ^2 + 1 }{2 {(l'(y_2))}^2} + \frac{\gc}{\gc-1}
    A\,{\big(\frac{p^+_0}{A}\big)}^{(\gc+1)/\gc}
    = B{\big(\frac{p^+_0}{A}\big)}^{2/\gc}.
\end{equation}
This equation gives the expression for $l'(y_2)$. We can find
$l(y_2)$ by integration with $l(0) = w_0$, where $w_0$ is the limit
of $b- b_0$ as $y_1\to \infty$.

Then we employ
\begin{equation}\label{eqn-varphi-inf}
z_1=(\phi^\infty-\phi^-)(\varphi^\infty,z_2)
\end{equation}
to solve for $\varphi^\infty$.
Also, equation \eqref{eqn-varphi-inf} restricted
on $z_2 =0$ gives rise to
\[
z_1 =\tan \theta_0\,\tilde{b}_0 + w_0 - \phi^-(\tilde{b}_0 ,0),
\]
from which we can solve for $\tilde{b}_0$.

By the definition of $\varphi^\infty$, we know that $\varphi^\infty$
satisfies \eqref{eqn-Mbar}. That is,
\begin{equation}\label{eqn-minfty}
  \big(\overline{M}^1 (\zz, A, \varphi^\infty, \grad \varphi^\infty)\big)_{z_1}
   + \big(\overline{M}^2 (\zz, A, \varphi^\infty,
   \grad \varphi^\infty)\big)_{z_2}=0.
\end{equation}

\subsection{Linearization}
Let
\begin{equation} \label{def-sigma-delta}
 \Sigma_{\gd} = \big\{
w\,:\, \|w\|_{2,\ga;(\gb,0);Q}^{(-1-\ga);\partial\mathcal{W}} \le \gd \big\},
\end{equation}
where the wedge boundary $\partial\mathcal{W}$ is the $z_1$-axis.
We will omit $\partial\mathcal{W}$ in the norm
when no confusion arises.

To solve equation \eqref{eqn-Mbar} in the first quadrant $Q$, we
first linearize \eqref{eqn-Mbar} and solve the linearized equation in
bounded domains, and then take the limit to obtain a solution in the
unbounded domain $Q$.

For given $\vph$ such that $\vph - \vph^\infty \in \Sigma_{\gd}$, we
define a map $\mathcal{Q}$:
$\tilde{\varphi}-\varphi^\infty=:\mathcal{Q}(\vph - \vph^\infty)$
in $\Sigma_{\gd}$ as the solution of the linearized equation \eqref{eqn-v}
of the nonlinear equation \eqref{eqn-Mbar} at $\vph$ as described below,
and show that there exists a fixed
point that is a solution for equation \eqref{eqn-Mbar}.

We use a straight line $L^R:=\{z_2=R- z_1\}$ to cut off $Q$ into a
triangular domain $Q^R:=\{ 0<z_2 < R- z_1,z_1 >0 \} $.

Let
$$
v= \tilde{\varphi} - \varphi^\infty, \qquad
\zeta=\tilde{b}-\tilde{b}_0.
$$
Taking the difference of equations
 \eqref{eqn-Mbar} and \eqref{eqn-minfty} and linearizing the resulting
equation lead to
\begin{equation}\label{eqn-v}
   \sum_{i,j=1,2} (a^\varphi_{ij} v_{z_i} + b_j^\varphi v)_{z_j}=0,
\end{equation}
where
\begin{eqnarray}
&&a^\varphi_{ij}= \int_0^1 \overline{M} ^i_{\varphi_{z_j}}(\zz,A,
\varphi^\infty + s(\vph -\vph^\infty),
  \grad (\varphi^\infty+ s(\vph -\vph^\infty)))\,{\rm d}s,\label{def-aij}\\[1mm]
&&b_j^\varphi = \int_0^1 \overline{M}^j_{\varphi}(\zz,A,
\varphi^\infty + s(\vph -\vph^\infty), \grad (\varphi^\infty+ s(\vph
-\vph^\infty)))\, {\rm d}s \label{def-bi}
\end{eqnarray}
for  $i,j=1,2$.

Replace $\varphi$ by $\varphi^\infty$ in $a^\varphi_{ij}$ and denote by $a^\infty_{ij}$. Then $a^\varphi_{ij}- a^\infty_{ij}$ is bounded in the H\"{o}lder norm
$\|\cdot \|_{1,\ga;(\gb,1);Q}^{(-\ga);\partial\mathcal{W}}$, $b^\varphi_j$ is bounded in   $\|\cdot \|_{1,\ga;(\gb+2,0);Q}^{(-\ga);\partial\mathcal{W}}$.
Also, the uniform ellipticity of equation \eqref{eqn-v}
follows from \eqref{def-aij}
and the uniform ellipticity of \eqref{eqn-Mbar}
for the solutions close to $\varphi_0^+$, provided that
$\delta$ in \eqref{def-sigma-delta} is chosen
sufficiently small.

The boundary condition on the $z_1$-axis is
\begin{equation}\label{con-v-z1}
  v|_{z_2 = 0} = \zeta.
\end{equation}
On the cutoff line $L^R$, we prescribe the condition:
\begin{equation}\label{con-v-LR}
   v|_{L^R} = \zeta(R),
\end{equation}
which is compatible with the condition on the $z_1$-axis at point $(R,0)$.

\smallskip
Condition  \eqref{con-phi-gt} on the $z_2$-axis can be linearized
as follows: Condition \eqref{con-phi-gt} can be rewritten as
\begin{equation*}
    \tilde{g} (\zz, \varphi, \nabla \varphi )-
     \tilde{g} (\zz, \varphi^\infty, \nabla \varphi^\infty )
    = -\tilde{g} (\zz, \varphi^\infty, \nabla \varphi^\infty ).
\end{equation*}
Therefore, we derive the oblique condition:
\begin{equation}\label{con-obl-v}
    \sum_{i=1,2}\nu_i^\varphi v_{z_i} + c^\varphi v
    =-\tilde{g} (\zz, \varphi^\infty, \nabla \varphi^\infty ) =: g_0,
\end{equation}
where
\begin{eqnarray*}
&&\nu_i^\varphi = \int_0^1 \tilde{g}_{\varphi_{z_i}}(\zz,
\varphi^\infty + s(\vph -\vph^\infty),
  \grad (\varphi^\infty+ s(\vph -\vph^\infty)))\,{\rm d}s,\\[1mm]
&&c^\varphi = \int_0^1 \tilde{g}_{\varphi}(\zz,
\varphi^\infty + s(\vph -\vph^\infty),
  \grad (\varphi^\infty+ s(\vph -\vph^\infty)))\,{\rm d}s,
\end{eqnarray*}
which have all the corresponding bounded H\"{o}lder norms.

When $U_0^+$ is on arc $\wideparen{TS}$, the direction of
$\nnu=(\nu_1, \nu_2)$ is
\begin{eqnarray*}
\nu_1 &=& \tilde{g}_{\varphi_{z_1}}\\[1mm]
&=& \frac{-\rho^{\gc -1}}{(\gc -1)u_1^2 G_A H_A (c^2 -q^2)}
  \Big(\big[\frac{1}{\rho u_1}\big]^2 u_1^2 \rho^2 c^2 - 2c^2
   \rho u_1 u_2\big[\frac{1}{\rho u_1}\big]
    + u_2^2 (c^2 - u_1^2)\Big)\\[1mm]
&>& 0
\end{eqnarray*}
since $[\frac{1}{\rho u_1}]<0$,
and
\begin{eqnarray*}
  \nu_2= \tilde{g}_{\varphi_{z_2}}
 = \frac{-\rho^{\gc -1}u_2}{(\gc -1)u_1 G_A H_A (c^2 -q^2)}C_p,
\end{eqnarray*}
where
\begin{equation}\label{con-TS}
C_p=[p]\big(c^2 + (\gc -1)q^2 - \gc u_1^2\big) + (\gc -1)\rho q^2 u_2^2
 + \big[\frac{1}{\rho u_1} \big] \rho^2 c^2 u_1 q^2.
\end{equation}
Since, on arc $\wideparen{TS}$, $C_p<0$
from \eqref{B.11},
we have
$$
\nu_2 <0.
$$
In particular, if $\delta$ is small, then
$$
\nu_2 \le \frac{1}{2}\nu_{20}^+ < 0,
$$
where $\nu_{20}^+$ is the quantity $\nu_2$ for the background subsonic
state.
This implies that condition \eqref{con-obl-v} is uniformly oblique.

Set
\[
\vae = \|U_0- U^-_0\|_{2,\ga;(1+\gb);\D^-} +
\|b-b_0\|_{2,\ga;(\gb);(0,\infty)}^{*}.
\]

\smallskip
Now, for any function $f$ of $(U^-, U)$, we use
$\hat{f}$ to denote the value at the background states:
$\hat{f} = f(U^-_0, U^+_0)$. We also omit domain $Q^R$ and
boundary $\partial\mathcal{W}$ in the norms when no confusion arises. In the following estimates, all generic constants $C$'s and $c_0$ are  only dependent on the background states and $\alpha, \beta$, but independent of $\ve, \delta$.

\subsection{$C^0$ estimate for $v$}

We employ the comparison principles, Theorem \ref{thm-max1} and Theorem
\ref{thm-max2}, to estimate $v$.

We decompose matrix $\mathbf{A} = (\hat{a}_{ij})$ into $\mathbf{A}
= K K^\top$, where
\[K= (k_{ij}) =
\left(
\begin{array}{cc}
 \sqrt{\frac{\hat{a}_{11}\hat{a}_{22}- \hat{a}_{12}^2}{\hat{a}_{22}}}
  &\frac{\hat{a}_{12}}{\sqrt{\hat{a}_{22}}}\\[2mm]
  0 & \sqrt{\hat{a}_{22}} \\
\end{array}
\right),
\]
and $\hat{a}_{ij}, i,j=1,2$, are the coefficients of the second order terms in the
linearized equation of the nonlinear equation \eqref{eqn-Mbar}
at the background solution.

We define the transformation $\zz = K \bar{\zz}$, where
$\bar{\zz}=(\bar{z}_1, \bar{z}_2)$ is a new coordinate system. Then
$\sum_{j=1,2}k_{ij}\bar{z}_j = z_i$ implies
$$
\sum_{i,j=1,2} \hat{a}_{ij}\partial^2_{z_i z_j}=
\Delta_{\bar{\zz}}.
$$

We use the polar coordinates $(r, \theta)$ for $\bar{\zz}$ to
construct a comparison function for $v$. That is,
$$
r = |\bar{\zz}|, \qquad \theta =
\arctan\big(\frac{\bar{z}_2}{\bar{z}_1}\big).
$$
Let $\bar{\theta} = t \theta + \tau$.  Define
\begin{equation}\label{def-vbar}
 \bar{v}= r^s \sin
\bar{\theta},
\end{equation}
where $\tau>0$, and $s$ and $t$ will be chosen later.

We compute
\begin{eqnarray}
\sum_{i,j=1,2}\hat{a}_{ij}\partial^2_{z_iz_j} \bar{v}= \Delta_{\bar{\zz}}
\bar{v}=(s^2 - t^2)\, r^{s-2}  \sin \bar{\theta}.
\label{est-lap-vbar}
\end{eqnarray}

Let $s= -\beta$, $t = \ga$, and $0<\beta < \ga$ in \eqref{def-vbar}.
We set $v_1 = r^{-\gb} \sin (\ga \theta +\tau)$.

First, \eqref{4.11-a} leads to
\begin{align*}
A'(z_2) &= O(\ve) \max(z_2,1)^{-\gb -2} \min(z_2,1)^{\ga-1}\\
& = O(\ve) \max(z_2,1)^{-\ga- \gb -1 } [ \max(z_2,1) \min(z_2,1)]^{\ga-1}\\
& = O(\ve) \max(z_2,1)^{-\ga- \gb -1 } z_2^{\ga-1}\\
& = O(\ve)(z_2+1)^{-\ga- \gb -1 } r^{\ga-1} (\sin \theta)^{\ga-1} .
\end{align*}
 Since \eqref{4.12-a} gives $\|\varphi-\varphi^\infty\|_{2,\alpha;(\beta,0);Q}^{(-1-\alpha);\partial\mathcal{W}}\le \delta$ , by the expressions of $a^{\varphi}_{ij}, b^{\varphi}_j$ (see \eqref{def-aij}\eqref{def-bi} ), we have
\begin{align*}
|a^{\varphi}_{ij} - \hat{a}_{ij} | &\le C \delta.\\
(a^{\varphi}_{ij})_{z_j} &=  O(\delta) r^{-\gb-2} (\sin
\theta)^{\ga -1} +   O(1) A'(z_2) \\
&=  O(\delta) r^{-\gb-2} (\sin
\theta)^{\ga -1} +  O(\ve)(z_2+1)^{-\ga- \gb -1 } r^{\ga-1} (\sin \theta)^{\ga-1} .\\
b^{\varphi}_i &= O(\vae)r^{-\b -2}.\\
(b^{\varphi}_i)_{z_i} &= O(\vae)r^{-\b -3} + O(\ve)r^{-\b -2} A'(z_2) \\
&=  O(\vae)r^{-\b -3} +O(\ve)r^{-\b -2} (\sin \theta)^{\ga-1} .
\end{align*}

\begin{align*}
\hat{a}_{ij}\partial^2_{z_iz_j} v_1&=- (\ga^2 - \gb^2)\, r^{-\gb-2}  \sin \bar{\theta}.\\
(a^{\varphi}_{ij} - \hat{a}_{ij})\partial^2_{z_iz_j} v_1& = O(\delta) r^{-\gb-2} . \\
(a^\varphi_{ij}
)_{z_j} \partial_{z_i}v_1 &=  O(\delta) r^{-2\gb-3} (\sin
\theta)^{\ga -1} +  O(\ve)(z_2+1)^{-\ga- \gb -1 } r^{\ga-\gb-2} (\sin \theta)^{\ga-1} \\
&\le O(\delta) r^{-\gb-2} (\sin
\theta)^{\ga -1} +  O(\ve)z_2^{-\ga} r^{\ga-\gb-2} (\sin \theta)^{\ga-1} \\
&= O(\delta) r^{-\gb-2} (\sin
\theta)^{\ga -1} +  O(\ve) r^{-\gb-2} (\sin \theta)^{-1} .\\
b^{\varphi}_i\partial_{z_i}v_1 &=  O(\vae)r^{-2\b -3}.\\
( b^{\varphi}_i)_{z_i}v_1 & =   O(\vae)r^{-\b -2}(\sin \theta)^{\ga-1}.
\end{align*}
Therefore, we compute
\begin{align*}
L^\varphi v_1
&= (\hat{a}_{ij}\partial^2_{z_iz_j}
+ (a^\varphi_{ij} - \hat{a}_{ij})\partial^2_{z_i z_j})v_1+ (a^\varphi_{ij}
)_{z_j} \partial_{z_i}v_1 + b^\varphi_i\partial_{z_i}v_1
+ (b^\varphi_i)_{z_i} v_1\\
& \le -c_0 r^{-\gb-2}
+ O(\ve) \,r^{-2-\gb} (\sin \theta)^{-1},
\end{align*}
where $c_0 >0$ is a constant depending on the background state and $\ga,\gb$.

Let $v_2 = r^{-\gb} \sin^\ga \theta$. In the same manner, we can obtain the following estimate:
\begin{align*}
L^\varphi v_2& \le - c_1 r^{-\gb -2} \sin^{\ga -2} \theta.
\end{align*}

Set $v_3 =v_1 +v_2$. Thus, we have
\begin{eqnarray*}
  L^\varphi v_3 < -c_0 r^{-\gb-2} <0.
\end{eqnarray*}
By Theorem \ref{thm-max1}, $\frac{v}{v_3}$ achieves its positive
maximum on the boundary.

\smallskip
On $z_2 =0$ and $L^R$, $\frac{v}{v_3}\le C \ve$.

\smallskip
Let $\theta_0 = \arctan (-\frac{k_{11}}{k_{12}})$. We compute
$\nabla_{\zz} \bar{v}$  on the $z_2$-axis:
\begin{eqnarray*}
 &&\bar{v}_{\bar{z}_1} = r^{s-1}( s \cos \theta_0 \sin \bar{\theta}
  - t \sin \theta_0 \cos \bar{\theta}),\\[1mm]
 &&\bar{v}_{\bar{z}_2} = r^{s-1} (s \sin \theta_0 \sin \bar{\theta}
 + t \cos \theta_0 \cos \bar{\theta}).
\end{eqnarray*}
Then
\begin{eqnarray*}
&& \nabla_{\zz} \bar{v} =(K^{-1})^\top  \nabla_{\bar{\zz}} \bar{v}
  = r^{s-1} \left(
\begin{array}{c}
 \frac{s \cos \theta_0 \sin \bar{\theta}
 - t \sin \theta_0 \cos \bar{\theta} }{k_{11}}\\[2mm]
  \frac{s\sin \bar{\theta}}{k_{22} \sin \theta_0}
\end{array}
\right),\\[2mm]
&&(v_3)_{z_1} =-\frac{\gb \cos \theta_0 \sin \bar{\theta}
 + \ga \sin \theta_0 \cos \bar{\theta}
 + (\ga+\gb) \sin^\ga \theta_0 \cos \theta_0}{k_{11}} r^{-\gb -1},\\
&&(v_3)_{z_2} =-\frac{\gb (\sin \bar{\theta}+ \sin^\ga
\theta_0)}{k_{22} \sin \theta_0} r^{-\gb -1}.
\end{eqnarray*}
Then, when $\gb$ is suitably small, we have
$$
D_{\nnu} (v_1 + v_2)< -c_1 r^{-\gb -1}.
$$

Assume that $\frac{v}{v_3}$ achieves its maximum $\ve M$ at some
point $P$ on the $z_2$-axis. We know that
$D_\nnu(\frac{v}{v_3})(P) \le 0$.

Since $|g_0| = |\tilde{g}(\zz, \varphi_0, \nabla\varphi_0)-\tilde{g}
(\zz, \varphi^\infty, \nabla \varphi^\infty )|
\le C \ve r^{-\gb-1}$, we obtain that, at point $P$,
\begin{eqnarray}
 \nonumber 0 &\ge& D_\nnu v - \frac{v}{v_3} D_\nnu v_3\\
 \nonumber &=& g_0 - c^\varphi v - \ve M D_\nnu v_3\\
&\ge& - C\ve(1+ \ve M) r^{-\gb-1}  + M\ve c_1 r^{-\gb-1}.
\label{est-max-oblique}
\end{eqnarray}
This implies that $M \le \frac{2C}{c_1}$ for sufficiently small $\ve$.

Thus, we obtain the estimate for $v$:
\begin{equation}\label{est-v-beta}
    |v| \le C \ve r^{-\gb}.
\end{equation}

\smallskip
\subsection{$C^{1,\ga}$ estimate for $v$ at corner $O$}
In \eqref{def-vbar}, let $s = 1 + \ga$ and $t = 1+ \ga + \tau$. We define
\begin{equation}\label{def-v4}
v_4 = r^{1+ \ga} \sin \big((1+ \ga +\tau)\theta + \tau\big).
\end{equation}
By \eqref{est-lap-vbar}, it is easy to check
$$
L^\varphi v_4 <-c_2\, r^{\ga-1}.
$$
On the $z_2$-axis, we have
\begin{eqnarray*}
  D_\nnu v_4 &=& r^\ga\left(
   \frac{\nu_1}{k_{11}}\big((\ga +1) \sin((\ga +\tau)\theta_0 +\tau)
   - \tau \sin \theta_0 \cos \bar{\theta}\big)
   + \nu_2 \frac{(\ga +1)\sin \bar{\theta}}{k_{22} \sin
   \theta_0}\right)\\[1mm]
   &< & -c r^\ga,
\end{eqnarray*}
provided that $\ga$ and $\tau$ are suitably small.

Then we can use $\ve C v_4$ as a comparison function to control $w
\equiv v- v(\mathbf{0})- D_{\zz}v(\mathbf{0})\cdot \zz$ for $r <2$.

Denote any quarter ball  $B_r(\mathbf{0}) \cap Q$ with radius $r$ by $B^+_r$.
In $B^+_2$,
\begin{eqnarray}
  L^\varphi w  &=&\sum_{j=1,2}(f_j)_{z_j}
  := -\sum_{j=1,2}\big(\sum_{i=1,2}a_{ij}^\varphi v_{z_i}(\mathbf{0})
  + b_j^\varphi(v(\mathbf{0})+D_{\zz}v(\mathbf{0})\cdot \zz)\big)_{z_j} \nonumber
  \\
  &\ge& - C\ve r^{\ga-1} \ge L^\varphi (C\ve v_4). \label{def-bi-bar}
\end{eqnarray}
By Theorem \ref{thm-max2}, we have
\begin{equation*}
\sup_{B^+_2}\Big(\frac{w}{\ve C v_4}\Big)
\le \sup_{\partial B^+_2}\Big(\frac{w^+}{\ve Cv_4}\Big) +1.
\end{equation*}
On $\partial B^+_2 \cap (\{z_2=0\} \cup \{|\zz|=2\})$,
we see that
$\frac{w}{\ve C v_4} \le C$.

Assume that $\frac{w}{C\ve v_4}$ achieves its maximum M  at a point
$P$ on the $z_2$-axis. The oblique condition \eqref{con-obl-v}
implies
\begin{eqnarray*}
   \sum_{i=1,2}\nu_i^\varphi w_{z_i} + c^\varphi w =\bar{g}_0= {\rm O}(\ve r^\ga).
\end{eqnarray*}
The same argument as in \eqref{est-max-oblique} implies that, at the
maximum point $P$,
\begin{eqnarray*}
   0&\ge& D_{\nnu} (\frac{w}{ v_4} )\\[1mm]
   &=& \frac{1}{v_4}\big(D_\nnu w - \frac{w}{v_4} D_\nnu v_4\big)\\[1mm]
   &\ge& \frac{1}{v_4}\big(-c^\varphi w - \ve C r^\ga + \ve Mc_0 r^\ga\big),
\end{eqnarray*}
which implies that $M \le \frac{C}{c_0}$. Thus, $w\le \ve C r^{1+\ga}$ in
$B^+_2$.

Similarly, we obtain the corresponding lower bound.

Therefore,  we conclude
\begin{equation}\label{est-scale-w}
    |w(\zz)| \le \ve C r^{1+\ga} \qquad \mbox{for any} \,\,\,\zz \in B^+_2.
\end{equation}

With estimate \eqref{est-scale-w}, we can use the scaling technique
to obtain the $C^{1,\ga}$ estimate for $w$  up to the corner. More
precisely, for any point $P_\ast \in B_1^+$ with polar coordinates
$(d_\ast, \theta_\ast)$, we consider two cases for different values
of $\theta_\ast$.

\medskip
\textit{Case 1: Interior estimate for $\theta_\ast\in
[\frac{\pi}{6}, \frac{\pi}{3}]$}. Set $B_1 =
B_{\frac{d_\ast}{6}}(P_\ast)$ and $B_2 =
B_{\frac{d_\ast}{3}}(P_\ast)$. Then $B_1\subset B_2 \subset B_2^+$.
By the Schauder interior estimates ({\it cf.} (4.45) and  Theorem 8.33 in
\cite{gilbarg}), we have
\[
\|w\|^{(0)}_{1,\ga;B_2} \le C \Big(\|w\|_{0,0;B_2} +
\sum_{i=1,2}\|f_i\|^{(1)}_{0,\ga;B_2}\Big),
\]
where $f_i$ is defined in \eqref{def-bi-bar}, $C$ is a constant
independent of $d_\ast$, and the weight of the norm is up to $\po
B_2$. Therefore, by \eqref{est-scale-w}, we conclude
\begin{equation}
\|w\|_{1,\ga;B_1} \le d_\ast^{-(1+\ga)} \|w\|^{(0)}_{1,\ga;B_2} \le
C\ve.
\end{equation}

\textit{Case 2: Boundary estimate for $\theta_\ast >\frac{\pi}{3}$
or $\theta_\ast < \frac{\pi}{6}$}. Denote $B_3=Q \cap
B_{\frac{2d_\ast}{3}}(P_\ast)$. By the Schauder boundary
estimate ({\it cf.} (4.46) and Theorem 8.33 in \cite{gilbarg}), we have
\begin{eqnarray*}
&&  \|w\|_{1,\ga;B_3}^{(0)}\\
    &&\le C \Big(\|w\|_{0,0;B_3} +
\sum_{i=1,2}\|f_i\|^{(1)}_{0,\ga;B_3} +
\|\zeta\|_{1,\ga;\overline{B_3}\cap \{z_2=0\}}+
\|\bar{g}_0\|^{(1)}_{0,\ga;\overline{B_3}\cap \{z_1=0\}} \Big)\\
&&\le  \ve C d_\ast^{1+\ga}.
\end{eqnarray*}

Combining Case 1 with Case 2 yields the corner estimate:
\begin{equation}\label{est-corner}
    \|v\|_{1,\ga;B_1^+} = \|w+v(\mathbf{0})+ D_{\zz}v(\mathbf{0})\cdot \zz \|_{1,\ga;B_1^+}
     \le C\ve.
\end{equation}

\subsection{$C^{1,\ga}$ estimate for $v$ away from corner $O$} \label{c1ga}
Away from the corner, suppose  $\zz^* = (z_1^*, z_2^*) \in Q^R$ with polar coordinates
$(R^*, \theta^*)$ for $1 <R^* <\frac{R}{2}$. Two cases will be considered below.

\textit{Case 1.} $z^*_2 > \frac{1}{2}$.
Let
$B_\ast:=B_{\frac{z_2^*}{2}}(\zz^*) \cap Q$.
If $\theta^\ast> \arctan 2$, we employ
the Schauder boundary  estimate; for $\theta^\ast\le \arctan 2$, we employ the Schauder
 interior estimate. Notice that
 \[
 [a_{ij}^\varphi]^{(0)}_{0,\alpha; B_*} \le (z_2^* )^\alpha [a_{ij}^\varphi]_{0,\alpha; B_*} \le  C \ve \le 1.
 \]
 Therefore, $ |a_{ij}^\varphi|^{(0)}_{0,\alpha; B_*} \le \Lambda$, where $\Lambda >0$ is a constant only depending on the background states. Hence the constant $C$ in the following estimate is independent of $\zz^*$:
\begin{eqnarray*}
 \|v\|_{1,\ga;B_\ast}^{(0)}
    &\le& C \Big(\|v\|_{0,0;B_\ast} +
\|\bar{g}_0\|^{(1)}_{0,\ga;(\frac{z_2^*}{2}, \frac{3z_2^*}{2})} \Big)\\
&\le & C\ve (R^\ast)^{-\gb},
\end{eqnarray*}
where the weight in the superscript is the distance up to $(\partial B_{\frac{z_2^*}{2}}(\zz^*)) \cap Q$. Shrinking the domain by setting $B'_\ast:=B_{\frac{z_2^*}{4}}(\zz^*) \cap Q$  yields
\[ \|v\|_{1,\ga;(\gb,0);B'_\ast}
 \le C\ve .
\]

\textit{Case 2.} $z^*_2 \le \frac{1}{2}$.
 In this region, we apply Schauder boundary estimate in $B_1^+ := B_1(\zz^*) \bigcap Q$  to obtain
\begin{eqnarray*}
	\|v\|_{1,\ga;B_1^+}^{(0)}
	&\le& C \Big(\|v\|_{0,0;B_1^+} +
	\|\zeta\|_{1,\ga;(z_1^*-1,z_1^*+1)}
 \Big)\\
	&\le & C\ve (R^\ast)^{-\gb},
\end{eqnarray*}
which implies that
\[
\|v\|_{1,\ga; (\beta,0);B_{\frac{1}{2}}^+} \le C\ve .
\]
Combining the estimates in the two cases above, we obtain the  estimate for $v$ in $Q^{R/2}$
\begin{equation}\label{est-c1ga-beta-v}
    \|v\|_{1,\ga;(\gb,0); Q^{R/2}} \le C\ve.
\end{equation}

\subsection{$C^{2,\ga}$ regularity}
For the $C^{2,\ga}$ estimates with a weight to the $z_1$-axis, we
rewrite equation \eqref{eqn-v} into the non-divergence form:
\begin{equation}\label{eqn-v-nondiv}
   \sum_{i,j=1,2}\big( a^\vph_{ij} v_{z_i z_j}+ ((a^\vph_{ij})_{z_j} + b_i^\vph)v_{z_i}+
   (b^\vph_i)_{z_i}v\big)= 0,
\end{equation}
with the boundary condition on the $z_2$-axis:
\begin{equation}\label{con-bd-v-nondiv}
   \sum_{i,j=1,2}\hat{\nu}_i v_{z_i}= g_1,
\end{equation}
where
\begin{eqnarray*}
  &&g_1= g_0-c^\vph v + \sum_{i=1,2}(\hat{\nu}_i- \nu_i^\vph) v_{z_i}.
\end{eqnarray*}

We   use  the same scaling as in the $C^{1,\ga}$ estimates and consider two cases. One is away from the $z_1$-axis and the other is close to the $z_1$-axis.
We will estimate the $C^{2,\ga}$ norm in $ Q^{R/4}$. Suppose $\zz^\ast =(z_1^\ast, z_2^\ast) \in Q^{R/4}$.

\textit{Case 1.}  $z_2^* >1$.  $B_\ast:=B_{\frac{z_2^*}{2}}(\zz^*) \cap Q$ is the same as in the $C^{1,\ga}$ estimates. Set
\[
T= B_* \bigcap \{z_1 = 0\}.
\]
Similar to the situation in Section \ref{c1ga}, because the coefficients in \eqref{eqn-v-nondiv} have the proper decay rate in $z_2$ direction, we have
\[
|a_{ij}^\varphi|^{(0)}_{0,\alpha; B_*}, |(a^\vph_{ij})_{z_j} + b_i^\vph|^{(1)}_{0,\alpha; B_*},   |(b^\vph_i)_{z_i} |^{(2)}_{0,\alpha; B_*} \le \Lambda.
\]
The Schauder interior and boundary estimates ({\it cf.} Theorem 6.26 in
\cite{gilbarg}) imply
\begin{equation}\label{est-v-C2alpha-0B2}
  \|v\|^{(0)}_{2,\ga;B_*} \le C \left(\|v\|_{0,0;B_*}
   + \|g_1\|^{(1)}_{1,\ga;T}  \right).
\end{equation}
Then
\begin{eqnarray*}
   \|g_1\|^{(1)}_{1,\ga;T}&\le& C \big( |g_0|^{(1)}_{1,\ga;T}+
 \ve \|v\|^{(0)}_{2,\ga;B_*}\big)\\
 &\le& C \ve( |\zz^*|^{-\gb} + \|v\|^{(0)}_{2,\ga;B_*}  )
\end{eqnarray*}
leads to
\begin{equation}\label{est-vbeta}
 \|v\|_{2,\ga;(\gb,0);B'_*} \le C\ve .
\end{equation}

\textit{Case 2.}   $z^*_1 \le 1 $. Define $\bar{v}(\zz) =v(\zz)-v(z_1^*,0)- \nabla v(z_1^*,0) \cdot \zz  $ and let $B_k^+ = B_k(\zz^*) \cap Q$.
Then
\begin{eqnarray*}
	\| \bar{v}\|_{0,0;B_*}& \le& C [v]_{1,\ga;B_2^+} |z_2^*|^{1+\ga}\\
	&\le &  C \ve |\zz^*|^{- \gb}{z_2^*}^{1+\ga}.
\end{eqnarray*}
The same Schauder interior and boundary estimates as \eqref{est-v-C2alpha-0B2} applied to $\bar{v}$ gives rise to
\[
\|\bar{v}\|_{2,\ga;(0,0);B'_*}\le C\ve |\zz^*|^{-\gb} {z_2^*}^{1+\ga}.
\]
Hence, we conclude that
\begin{equation}\label{est-vbarcase2}
\|\bar{v}\|^{(-1-\ga);\partial \mathcal{W}}_{2,\ga;(\gb,0);B'_*}\le C\ve .
\end{equation}
Estimate \eqref{est-vbeta},\eqref{est-vbarcase2}, together with \eqref{est-v-beta}, leads to
\begin{equation}\label{est-v-c2al-QR}
\|v\|_{2,\ga;(\gb,0); Q^{R/4}}^{(-1-\ga);\partial \mathcal{W}} \le C\ve.
\end{equation}

Solution $v$ depends on $R$, which is denoted by $v^R$. By
compactness of $v^R$, we can find a subsequence converging to
$\tilde{v}$ such that
\begin{equation}
  \|\tilde{v}\|_{2,\ga;(\gb,0); Q}^{(-1-\ga);\partial \mathcal{W}} \le C\ve. \label{est-vtve}
\end{equation}
When $C \ve < \gd$, then $ \tilde{v} \in \Sigma_\gd$, and
$\tilde{v}$ is a solution of equation \eqref{eqn-v}.

\subsection{Uniqueness}
Because of the decay of $\tilde{v}$ at infinity, we can obtain the
uniqueness of $\tilde{v}$ by the comparison principle as follows:

Suppose that $v_1$ and $v_2$ are two solutions of \eqref{eqn-v}. The
difference $w= v_1 -v_2$ satisfies the same equation and
boundary conditions on the $z_2$-axis, and $w=0$ on the $z_1$-axis.

For any small positive constant $\tau$, we let $R$ to be large enough
such that $|w| \le \tau$ on the cutoff boundary $L^R$. Similar to
\eqref{est-max-oblique}, we employ Theorem \ref{thm-max1} to obtain
$|w| \le \tau $ in $Q^R$. Let $R \to \infty$ and $\tau \to \infty$,
we conclude that $w\equiv 0$, which implies the uniqueness.

\subsection{Determination of $\varphi$ as a fixed point}
 We define a map
$\mathcal{Q}: \Sigma_\gd \to  \Sigma_\gd$ by
$$
\mathcal{Q}(w) \equiv \tilde{v} \qquad\mbox{for any}\,\,\,\, w = \vph
-\vph^\infty,
$$
where the closed set $\Sigma_\gd$ is defined in \eqref{4.12-a}.
We employ the Schauder fixed point theorem to prove the existence
of a fixed point for $\mathcal{Q}$. That is, we need to verify the
following facts:

\begin{enumerate}
\item[(i)] $\Sigma_\gd$ is a compact and convex set in a Banach space
$\mathcal{B}$;

\smallskip
 \item[(ii)] $\mathcal{Q}: \Sigma_\gd \to  \Sigma_\gd $ is continuous in
 $\mathcal{B}$.
\end{enumerate}
Choose the Banach space $\mathcal{B}$
as defined in \eqref{4.13-a}.
Then $\Sigma_\gd$
is compact and convex in $\mathcal{B}$.

For the continuity of $\mathcal{Q}$, we make the following
contradiction argument. Let $w_0, w^n\in \Sigma_\gd$ and $w^n \to
w_0$ in $\mathcal{B}$. Then $v^n \equiv \mathcal{Q} (w^n)$ in
$\Sigma_\gd$ and  $v_0 \equiv \mathcal{Q} (w_0)$ in $\Sigma_\gd$. We
want to prove that $v^n \to v_0$ in $\mathcal{B}$.

Assume that $v^n \nrightarrow v_0$. Then there exist $c_0 >0$ and a
subsequence $\{v^{n_k}\}$ such that
$\|v^{n_k} - v_0\|_{\mathcal{B}} \ge c_0$.
Since $\{v^{n_k}\} \subset \Sigma_\gd$ is compact in
$\mathcal{B}$, we can find another subsequence, again denoted by
$\{v^{n_k}\}$, converging to some $v_1 \in \Sigma_\gd$.
Then $v_0$ and $v_1$ satisfy the same equation \eqref{eqn-v},
where $\vph= \vph^\infty + w_0$,
which contradicts with the uniqueness of
solutions for \eqref{eqn-v}.
Therefore, $\mathcal{Q}$ is continuous in
$\mathcal{B}$.

Thus, we have a fixed point $v$ for $\mathcal{Q}$, which gives a
solution $\vph \equiv \vph^\infty + v$ for the nonlinear equation
\eqref{eqn-Mbar}. The solution is unique by applying the same
comparison principle as for the linear equation.

Therefore, for given $(A, U^-, b)$, we have determined
$\varphi$.

\section{Determination of the Entropy Function $A$ as a Fixed Point}

In this section, we employ the implicit
function theorem to prove the existence of a fixed point $A$.

\subsection{Setup for the implicit function theorem for $A$}
Through the shock polar, we can determine the values of $U$ at $O$,
and hence $A(0)=A_t$ is fixed, depending on the values of $U_-(O)$
and $b'(0)$. Then we solve \eqref{con-Ht} to obtain a unique
solution $\tilde{A}= h (\zz , \varphi, \grad \varphi )$
that defines the iteration map.
To complete the proof, we need to prove that
the iteration map exists and has a fixed point
by the implicit function theorem.

In order to employ the implicit function theorem,
we need to set up a Banach space for $A$.
To realize this, we perform the following
normalization for $(A, U^-, b)$.

Let $A^+_0 = \frac{p_0^+}{(\rho_0^+)^\gc}$. Define a smooth cutoff
function $\chi$ on $[0,\infty)$ such that
\[
\chi(s)= \left\{%
\begin{array}{ll}
    1, & 0\le s <1, \\[1mm]
    0, & s>2. \\
\end{array}%
\right.
\]
Let $\omega = U^- - U^-_0$ and $\mu = b-b_0$. Set
\begin{equation}\label{def-fix-a0}
A(0) := t(\omega(0), \mu'(0)),
\end{equation}
 where $t$ is a function
determined by the Rankine-Hugoniot conditions
\eqref{con-RH1}--\eqref{con-RH4}.

\smallskip
Set $\gl = A - w_t$ with
\begin{equation}\label{6.2-a}
w_t(z_2) = A^+_0 + \big( t(\omega(0),
\mu'(0))-A^+_0\big)\chi(z_2).
\end{equation}
Then $\gl (0) =0$.

\medskip
Given $(\gl,\omega, \mu)$, we can compute $\tilde{A}-w_t=\mathcal{J}(A-w_t)$
that defines the iteration map, by constructing
a map $\tilde{\gl} = \tilde{A}-w_t \equiv \mathcal{P}
(\gl,\omega, \mu)$.

We will prove that equation $\mathcal{P}(\gl,\omega, \mu) - \gl
=0$ is solvable for $\gl$, given parameters $(\omega, \mu )$
near $(0,0)$. This is obtained by the implicit function theorem.

\subsection{Properties of the operator $\mathcal{P}$}
\label{sec-pro-oper}

We first define some Banach spaces for operator $\mathcal{P}$.
Set
\begin{equation}\label{def-space-x}
X= \{\gl\,: \, \gl(0) =0, \|\gl\|_X < \infty \}
\end{equation}
with
\begin{equation} \label{def-norm-x}\|\gl\|_X \equiv
\|\gl\|^{(-\ga);\{0\}}_{1,\ga;(1+\gb);(0,\infty)},
\end{equation}

\smallskip
\begin{equation}\label{def-space-y}
Y= \{\omega\,:\, \|\omega\|_Y < \infty \}
\end{equation}
with
\begin{equation}\label{def-norm-y}
\|\omega\|_Y \equiv \|\omega\|_{2,\ga;(\gb+1);\Omega_-}
\end{equation}
for a vector-valued function $\omega$, and
\begin{equation}\label{def-space-z}
Z= \{\mu\,:\, \mu(0) =0, \|\mu \|_Z < \infty \}
\end{equation}
with
\begin{equation}\label{def-norm-z}
\|\mu\|_Z \equiv \|\mu\|^{*}_{1,\ga; (\gb); (0, \infty)}.
\end{equation}

Clearly, $X,Y$, and $Z$ are Banach spaces. Operator
$\mathcal{P}$ is a map from $X\times Y\times Z$ to $X$.

We now define a linear operator $D_\gl \mathcal{P}(\gl,\omega, \mu)
$ and show that it is the partial differential of $\mathcal{P}$ with
respect to $\gl$.  When no confusion arises, we may drop the
variables $(\gl,\omega, \mu)$
in $D_\gl \mathcal{P}(\gl,\omega, \mu)$.

We divide the proof into four steps.

\medskip
{\bf 1}. {\it Definition of a linear operator
$D_\gl\mathcal{P}(\gl,\omega, \mu)$}. Given $\gd\gl\in X$, we solve
the following equation for $\gd\varphi$:
\begin{equation}\label{eqn-dphi}
  \sum_{i=1,2} \big(\sum_{j=1,2}a_{ij}^\gl (\gd \varphi)_{z_j} + b_i^\gl \gd \varphi
   + d_i^\gl \gd \gl\big)_{z_i}=0,
\end{equation}
with boundary conditions:
\begin{eqnarray}
&&\gd \varphi |_{z_2 =0} =0,\\[1mm]
&&\big(\sum_{i=1,2}\nu_i^\gl (\gd \varphi)_{z_i} + c^\gl \gd \varphi\big) |_{z_1 =0 }=0,
\end{eqnarray}
where
\begin{eqnarray*} &&a_{ij}^\gl= \overline{M}^i_{\varphi_{z_j}}
  (\zz, w_t+ \gl, \varphi, \nabla \varphi),\qquad
b_i^\gl= \overline{M}^i_{\varphi}
  (\zz, w_t+ \gl, \varphi, \nabla \varphi),\\
&&d_i^\gl= \overline{M}^i_A
  (\zz, w_t+ \gl, \varphi, \nabla \varphi),\quad
 \nu_i^\gl=g_{\varphi_{z_i}}(\zz,\varphi, \nabla \varphi),\quad
c^\gl=g_{\varphi}(\zz,\varphi, \nabla\varphi).
\end{eqnarray*}

Once we have known $\gd \varphi$, we define
\begin{equation}\label{def-tilde-delta-lambda}
    \widetilde{\gd \gl} = D_\gl \mathcal{P}(\gl,\omega,\mu) (\gd \gl)
    :=\sum_{i=1,2}e_i^\gl (\gd \varphi )_{z_i} + e_0^\gl \gd \varphi,
\end{equation}
where
\begin{eqnarray*}
 e_i^\gl=h_{\varphi_{z_i}}(\zz,\varphi, \nabla \varphi),\qquad
e_0^\gl=h_{\varphi}(\zz,\varphi, \nabla \varphi).
\end{eqnarray*}

It is easy to see that $\widetilde{\gd\gl}(0)=0$. Then
$D_\gl\mathcal{P}(\gl,\omega,\mu)$ is a linear operator from $X$ to
$X$.

\medskip
{\bf 2}. {\it Show that $D_\gl\mathcal{P}(\gl,\omega,\mu)$ is
the partial differential of $\mathcal{P}$ with respect to $\gl$ at
$(\gl,\omega,\mu)$}.

For fixed $(\omega,\mu)$, we need to estimate $\mathcal{P}(\gl + \gd
\gl,\omega,\mu)- \mathcal{P}(\gl,\omega,\mu)-D_\gl
\mathcal{P}(\gl,\omega,\mu)(\gd \gl)$ to be ${\rm o}(\gd \gl)$.

For $\gl$, we define $\varphi$ by following the definition of
$\mathcal{P}$, {\it i.e.}, we solve the following equation, an alternative
form from \eqref{eqn-Mbar}:
\begin{equation}\label{eqn-phi-lambda}
\sum_{i=1,2}\big(\overline{M}^i (\zz, w_t+ \gl, \varphi,
\grad\varphi)\big)_{z_i}=0,
\end{equation}
with  boundary conditions \eqref{con-phi-gt} and
\eqref{con-phi-z1}.

For $\gl + \gd \gl$, the corresponding potential $\bar{\varphi}$
satisfies
\begin{equation}\label{eqn-phibar-gdgl}
\sum_{i=1,2}\big(\overline{M}^i (\zz, w_t+ \gl + \gd \gl,
    \bar{\varphi}, \grad \bar{\varphi})\big)_{z_i}=0,
\end{equation}
with the same boundary conditions \eqref{con-phi-gt} and
\eqref{con-phi-z1}.

Taking the difference of equations \eqref{eqn-phi-lambda}
and \eqref{eqn-phibar-gdgl}  leads to the following equation:
\begin{equation}\label{eqn-phi-phibar}
\sum_{i=1,2}\big(\sum_{j=1,2}a_{ij}^{\gd \gl} (\bar{\varphi}- \varphi)_{z_j} + b_i^{\gd \gl}
(\bar{\varphi}-\varphi)+ d_i^{\gd \gl} \gd \gl\big)_{z_i}=0,
\end{equation}
with boundary conditions:
\begin{eqnarray}
  &&(\bar{\varphi}- \varphi) |_{z_2 =0} =0,\\[1mm]
 &&\big(\sum_{i=1,2}\nu_i^{\gd \gl} (\bar{\varphi}- \varphi)_{z_i} +
  c^{\gd \gl} (\bar{\varphi}- \varphi)\big)|_{z_1=0}=0,
\end{eqnarray}
where
\begin{eqnarray*}
&&a_{ij}^{\gd \gl}=\int_0^1 \overline{M}^i_{\varphi_{z_i}}
  (\zz, w_t+ \gl + s \gd \gl, \varphi + s(\bar{\varphi}- \varphi),
   \nabla (\varphi +s(\bar{\varphi}- \varphi)))\, {\rm d}s,\\[1mm]
&&b_i^{\gd \gl}=\int_0^1 \overline{M}^i_{\varphi}
  (\zz, w_t+ \gl + s \gd \gl, \varphi + s(\bar{\varphi}- \varphi),
   \nabla (\varphi +s(\bar{\varphi}- \varphi)))\, {\rm d}s,\\[1mm]
&&d_i^{\gd \gl}= \int_0^1 \overline{M}^i_A
  (\zz, w_t+ \gl + s \gd \gl, \varphi + s(\bar{\varphi}- \varphi),
   \nabla (\varphi +s(\bar{\varphi}- \varphi)))\, {\rm d}s,\\[1mm]
&& \nu_i^{\gd \gl}=\int_0^1 g_{\varphi_{z_i}}
 (\zz,\varphi + s(\bar{\varphi}- \varphi),
   \nabla (\varphi +s(\bar{\varphi}- \varphi)))\, {\rm d}s,\\[1mm]
&&c^{\gd \gl}=\int_0^1 g_{\varphi}
 (\zz,\varphi + s(\bar{\varphi}- \varphi),
   \nabla (\varphi +s(\bar{\varphi}- \varphi)))\, {\rm d}s.
\end{eqnarray*}

Take the difference of \eqref{eqn-phi-phibar} and \eqref{eqn-dphi},
and let $v := \bar{\varphi}- \varphi - \gd \varphi$. Then we
have
\begin{eqnarray}\nonumber
 &&\sum_{i=1,2}\big(\sum_{j=1,2}a_{ij}^{\gd \gl} v_{z_j} + b_i^{\gd \gl}
    v\big)_{z_i}\\
 &&=  -\sum_{i=1,2}\big(
 \sum_{j=1,2}(a_{ij}^{\gd \gl}-a_{ij}^{ \gl}) (\gd \varphi)_{z_j}+
  (b_i^{\gd \gl}-b_i^{\gl})\gd \varphi +
   (d_i^{\gd \gl}-d_i^{\gl}) \gd \gl \big)_{z_i}\nonumber\\
&&\equiv E^{\gd \gl},\label{eqn-dphibar-dphi}
\end{eqnarray}
with boundary conditions:
\begin{eqnarray*}
  &&v |_{z_2 =0} =0,\\[1mm]
 &&\sum_{i=1,2} \nu_i^{\gd \gl} v_{z_i} +
  c^{\gd \gl} v
  =\sum_{i=1,2} (\nu_i^{ \gl}-\nu_i^{\gd \gl})(\gd \varphi)_{z_i}
  + (c^{\gl}-c^{\gd \gl})\gd \varphi \qquad \mbox{for}\quad  z_1=0.
\end{eqnarray*}

Since $\bar{\varphi}- \varphi = {\rm O}(\gd \gl)$ and
$\gd \varphi= {\rm O}(\gd \gl)$, we conclude that
$E^{\gd \gl}$ and $\sum_{i=1,2}(\nu_i^{ \gl}-\nu_i^{\gd \gl})(\gd
\varphi)_{z_i} + (c^{\gl}-c^{\gd \gl})\gd \varphi$
are ${\rm o}(\gd \gl)$.
Thus, we have
$$
v={\rm o}(\gd \gl).
$$
Then
\begin{eqnarray*}
&&\mathcal{P}(\gl + \gd \gl,\omega,\mu)-
\mathcal{P}(\gl,\omega,\mu)-D_\gl \mathcal{P}(\gl,\omega,\mu)(\gd
\gl)\\[1mm]
  &&:=\widetilde{\bar{\gl}} - \tilde{\gl} - \widetilde{\gd \gl}\\[1mm]
  &&= h(\zz, \bar{\phi}, \nabla \bar{\phi}) - h(\zz, {\phi}, \nabla
  {\phi})-\sum_{i=1,2} e_i^\gl (\gd \varphi )_{z_i} - e_0^\gl \gd \varphi\\[1mm]
  &&=\sum_{i=1,2}e_i^{\gd \gl}v_{z_i} + e_0^{\gd \gl} v +
\sum_{i=1,2}(e_i^{\gd \gl}-e_i^\gl) (\gd \varphi )_{z_i}+(e_0^{\gd \gl}  -
e_0^\gl) \gd \varphi,
\end{eqnarray*}
where
\begin{eqnarray*}
&&e_i^{\gd \gl}= \int_0^1 h_{\varphi_{z_i}}
 (\zz,\varphi + s(\bar{\varphi}- \varphi),
   \nabla (\varphi +s(\bar{\varphi}- \varphi)))\, {\rm d}s,\\
&&e_0^{\gd \gl}= \int_0^1 h_{\varphi}
 (\zz,\varphi + s(\bar{\varphi}- \varphi),
   \nabla (\varphi +s(\bar{\varphi}- \varphi)))\, {\rm d}s.
\end{eqnarray*}
Therefore, we conclude that $\widetilde{\bar{\gl}} - \tilde{\gl} -
\widetilde{\gd \gl} = {\rm o}(\gd \gl)$. Thus, $D_\gl
\mathcal{P}(\gl,\omega,\mu)$ is {\it the partial differential} of
$\mathcal{P} $ with respect to  $\gl$ at $(\gl,\omega,\mu)$.

\medskip
{\bf 3.} {\it Continuity of $\mathcal{P}$ and $D_\gl \mathcal{P}$}.
It suffices to show the continuity of $\mathcal{P}$ at any point
$(\gl^\ast,\omega^\ast,\mu^\ast)$ near $(0,0,0)$ by a contradiction
argument, since the same argument applies to $D_\gl \mathcal{P}$.

Assume that there exists a sequence $ (\gl^n,\omega^n,\mu^n) \to
(\gl^\ast,\omega^\ast,\mu^\ast)$ in $X \times Y \times Z$, while
$\|\widetilde{\gl^n}- \widetilde{\gl^\ast}\|_X \ge c_0 >0$.
Using the compactness of $\widetilde{\gl^n}$ in $\|\cdot\|_{\ga'}$  and
the compactness of $\varphi^n$ in $\| \cdot \|_{1,\ga'}$ with
$\ga'<\ga$, we find a subsequence $\{n_k\}$ such that
$\widetilde{\gl^{n_k}}$ converges to some
$\widetilde{\gl^{\ast\ast}}$ in $\|\cdot \|_{\ga'}$
and $\varphi^n$
converges to some $\varphi^{\ast\ast}$ in $\|\cdot \|_{1,\ga'}$.
Now we see that $\varphi^{\ast}$ and $\varphi^{\ast\ast}$ satisfy
the same equation \eqref{eqn-Mbar} with the same boundary
conditions. By the uniqueness of solutions for \eqref{eqn-Mbar},
we conclude that $\varphi^{\ast}=\varphi^{\ast\ast}$.
This implies that
$\widetilde{\gl^{\ast}} = \widetilde{\gl^{\ast\ast}}$.
However, by
assumption,
$\|\widetilde{\gl^{\ast\ast}}-\widetilde{\gl^\ast}\|_X \ge c_0 >0$.
This leads to a contradiction.
Therefore, $\mathcal{P}$ is continuous.

\medskip
{\bf 4.} {\it Show that, at the background state
$(\gl,\omega,\mu)= (0,0,0)$, $D_\gl \mathcal{P}(0,0,0) -I$ is
an isomorphism.}

When $(\gl,\omega,\mu)= (0,0,0)$, we solve for $\gd \varphi$:
\begin{equation}\label{eqn-gdphi}
\sum_{i=1,2}\big(\hat{M}^i_A \gd \gl +  \hat{M}^i_j (\gd \varphi)_{z_j}\big)_{z_i}
= 0   \qquad\mbox{in $Q$},
\end{equation}
with the boundary conditions:
\begin{eqnarray}
 &&\sum_{i=1,2} \hat{g}_i (\gd \varphi)_{z_i} |_{z_1 =0} = 0,
  \label{con-bdz2-gd-phi}\\[2mm]
 && \gd \varphi |_{z_2 =0} = 0, \label{con-bdz1-gd-phi}
\end{eqnarray}
where $(\hat{M}^i_A, \hat{M}^i_j, \hat{g}_i)$ are the
corresponding
$(\overline{M}_A, \overline{M}^i_{\varphi_{z_j}}, \tilde{g}_{\varphi_{z_i}})$
evaluated at the background state
$(U_-^0, U_+^0, \zeta_0)$.

Then we have
$$
D_\gl \mathcal{P} (\gd \gl)
:=\widetilde{\gd \gl}
= \sum_{i=1,2}\hat{h}_i(\gd \varphi)_{z_i},
$$
where $\hat{h}_i:=h_{\varphi_{z_i}}$ evaluated at the background state.

We rewrite the system in the following way:
Let
\[
m = \frac{\hat{M}^2_A}{\hat{M}^2_2}, \qquad
  \overline{\gd \varphi} = \gd \varphi +
  m \int_0^{z_2} \gd \gl (s)\, ds.
\]
Then \eqref{eqn-gdphi}--\eqref{con-bdz1-gd-phi} become
\begin{eqnarray}
  &&\sum_{i,j=1,2}\hat{M}^i_j (\overline{\gd \varphi})_{z_i z_j} = 0 \qquad
    \mbox{in} \,\, Q, \label{eqn-gdphibar}\\[1mm]
  &&\sum_{i=1,2}\hat{g}_i (\overline{\gd \varphi})_{z_i }|_{z_1=0}  \label{con-ghat-bar}
   = m \hat{g}_2 \gd \gl,
 \\[1mm]
&&\overline{\gd \varphi}|_{z_2=0}= 0. \label{con-phi=0}
\end{eqnarray}
Then
\begin{equation}\label{eqn-gdAbar}
    \widetilde{\gd \gl } =\sum_{i=1,2} \hat{h}_i  (\overline{\gd \varphi})_{z_i
    }(0,z_2)
    -\hat{h}_2 m \gd \gl.
\end{equation}
Equations \eqref{con-ghat-bar} and \eqref{eqn-gdAbar} give rise to
\begin{eqnarray}
 ( \hat{g}_2 \hat{h}_1 - \hat{g}_1 \hat{h}_2)
 (\overline{\gd \varphi})_{z_1}=\hat{g}_2 \widetilde{\gd \gl}.
 \label{con-phi1}
\end{eqnarray}
Noticing that $\widetilde{\gd \gl} (0)=0$ and $\hat{g}_2 \hat{h}_1 -
\hat{g}_1 \hat{h}_2 \ne 0$, the boundary conditions \eqref{con-phi1}
and \eqref{con-phi=0} are compatible to guarantee the unique
solution of \eqref{eqn-gdphibar} for arbitrary $\widetilde{\gd \gl}$.
This implies that $I - D_\gl \mathcal{P}$ is onto.

When $\gd \gl-D_\gl \mathcal{P} (\gd \gl) = 0$, \eqref{eqn-gdAbar}
becomes
\begin{equation}\label{eqn-gdAbar1}
    \sum_{i=1,2} \hat{h}_i  (\overline{\gd \varphi})_{z_i
    }(0,z_2)
   = (1+\hat{h}_2 m) \gd \gl.
\end{equation}
Canceling $\gd \gl$ in \eqref{con-ghat-bar} and
\eqref{eqn-gdAbar1} implies
\begin{equation}\label{con-phibar}
\big(\hat{g}_1 + m( \hat{g}_1 \hat{h}_2 -\hat{g}_2 \hat{h}_1)\big)
(\overline{\gd \varphi})_{z_1 }  + \hat{g}_2(\overline{\gd
\varphi})_{z_2}=0.
\end{equation}

The oblique boundary condition \eqref{con-phibar} above is nondegenerate,
since $\hat{g}_2 \ne 0$.
Therefore, solving equation \eqref{eqn-gdphibar} with boundary conditions  \eqref{con-phi=0}
and \eqref{con-phibar} leads to  $\overline{\gd \varphi} \equiv 0$. Using \eqref{con-ghat-bar},
we conclude that
$\gd \gl \equiv 0$,  which implies that $I - D_\gl\mathcal{P}$  is one-to-one.
Thus, $I - D_\gl \mathcal{P}$ is an isomorphism.

Therefore, given $U^-$ and $b$, operator $\mathcal{P}$ has a
fixed point $\gl$, which determines $A=w_t+\gl$.

\smallskip
With $A$ from $\gl$, we obtain a unique potential $\varphi$. Using the Hodograph transformation \eqref{def-hodo}, we can solve for $\phi$, with estimate
\[
\|\phi-\phi^\infty\|^{(-1-\alpha); \partial \mathcal{W}}_{2,\alpha; (\beta,0); \D_{\hat{\sigma}}}
\le C \ve.
\]
The subsonic flow $U$ can be expressed by a smooth function of  $(\grad \phi, A, B)$,
 i.e. $ U(\yy) = F(A(y_2), B(y_2), \grad \phi(\yy))$.
 Also, $U^\infty = F(A, B, \grad \phi^\infty) $. Then
 \begin{align*}
 U- U^\infty &= \int_0^1 F_{\grad\phi }(A,B, \grad \phi^\infty + s(\grad \phi - \grad \phi^\infty)) ds(\grad \phi - \grad \phi^\infty) \\
 & \triangleq G(A,B, \grad v)  \grad v.
 \end{align*}
Based on the expression above, we can conclude
\[
\|U - U^\infty  \|^{(-\alpha); \partial \mathcal{W}}_{1,\alpha; (\beta,1); \D_{\hat{\sigma}}} \le C\ve.
\]
Since the Lagrangian coordinate transformation is bi-Lipschitz, we can change the $\yy$-coordinates back to  the $\xx$-coordinates and
 complete the existence part of Theorem 2.1.

\section{Decay of the Solution to the Asymptotic State in the Physical Coordinates}

Now we determine the decay of the solution to the asymptotic state $U^\infty$
in the $\xx$-coordinates. We divide the proof into four steps.

\medskip
{\bf 1.} For the fixed point established in \S 5.7, estimate \eqref{est-vtve} implies
$$
\varphi-\varphi^\infty\in \Sigma_{C\ve}.
$$
Then the change of variables from the $\zz$-coordinates to $\yy$-coordinates yields
\begin{equation}\label{P-1}
\|\hat{\sigma}-\hat{\sigma}_0\|^{*, (-1-\alpha); \{0\}}_{2,\alpha; (\beta);\R^+}
+\|\phi-\phi^\infty\|^{(-1-\alpha); \partial \mathcal{W}}_{2,\alpha; (\beta,0); \D_{\hat{\sigma}}}
\le C \ve,
\end{equation}
where $\D_{\hat{\sigma}}$ is the subsonic region defined in $\eqref{3.12a}$.

\smallskip
{\bf 2.}  From \eqref{def-norm-x}, \eqref{P-1}, and Step 4 in \S 6.2, we have
\begin{equation}\label{P-2}
\|A^\infty-A_0^+\|_{1,\alpha; (1+\beta); \R^+}^{(-\alpha); \{0\}}
\le C\ve.
\end{equation}
Then, from \S 5.1, we obtain that, for $U^\infty=U^\infty (y_2)=(\uu^\infty, p_0^+, \rho^\infty)(y_2)$,
\begin{equation}\label{P-2a}
\|U^\infty-U_0^+\|^{(-\ga);\{0\}}_{1, \alpha; (1+\beta); \R^+}\le C\ve,
\end{equation}
and
\begin{equation}
\uu^\infty\cdot (\sin\theta_0, -\cos\theta_0)=0.
\end{equation}

\smallskip
{\bf 3.} Since $x_2=\phi(\yy)$, we now estimate $\phi(\yy)-\tan\theta_0\,y_1$, which is
$x_2-\tan \theta_0\, x_1$.
From \S 5.1 and \eqref{P-1},
$$
\phi^\infty-\tan \theta_0\, y_1=l(y_2),
$$
so that
\begin{equation}\label{P-3}
\|(\phi-\tan\theta_0\,y_1)-l(y_2)\|_{2,\alpha; (\beta,0); \D_{\hat{\sigma}}}^{(-1-\alpha); \partial\mathcal{W}}\le C\ve.
\end{equation}
In particular, this implies that, for each $y_2>0$,
\begin{equation}\label{P-4}
l'(y_2)=\lim_{y_1\to \infty}\partial_{y_2}(\phi(\yy)-\tan\theta_0\,y_1)
=\lim_{y_1\to\infty}\phi_{y_2}(\yy).
\end{equation}
By \eqref{eqn-phi-deri},
$\phi_{y_2}=\frac{1}{\rho u_1}$. By \eqref{P-2},
$$
\phi_{y_2}\ge \frac{1}{\rho_0^+ u_{10}^+}-C\ve \ge \frac{1}{2\rho_0^+ u_{10}^+}
\qquad\mbox{for small $\ve>0$}.
$$
Therefore, there exists $C>0$ such that, for any $y_2>0$,
$$
\frac{1}{C}\le l'(y_2)\le C.
$$
Since $\psi(\mathbf{0})=0$, we conclude that $l(0)=w_0$,
where $w_0$ is the limit of $b - b_0$ (see \S \ref{Section-5.1}).

Furthermore, by \eqref{P-3}--\eqref{P-4}, and \eqref{5.1a} with
\eqref{P-2}, we have
\begin{equation}\label{P-5}
\big\|l-\frac{y_2}{\rho_0^+ u_{10}^+}\big\|_{2,\alpha; (\beta); \R^+}^{*, (-1-\alpha); \{0\}}\le C\ve.
\end{equation}
Then there exists $g: [w_0,\infty)\to [0,\infty)$ with $g(w_0)=0$ such that
$g=l^{-1}$:
$$
g(l(y_2))=y_2 \qquad \mbox{on $(0,\infty)$}
$$
and
$$
\frac{1}{C}\le g'(s)\le C,
$$
so that $g(\cdot)$ satisfies \eqref{P-5}. In order to composite $g$ with $\phi-\tan\theta_0\,y_1$, we need to extend $g$ to a larger  domain. Notice that $\phi_{y_2 } \ge \frac{1}{2\rho_0^+ u_{10}^+}$ implies that $\phi$ is an increasing function in $y_2$. Therefore, condition \eqref{Con-bd-phi-wall} and the assumption \eqref{con-wallTS} on $b$ yields \[\phi-\tan\theta_0\,y_1 \ge -\ve.
\]
Thus, we will extend $l$ to domain $[-w_1, \infty)$ so that $g$ is defined on $[-\ve, \infty)$.
 By  \eqref{P-5}, we know that $l'(y_2) \ge \frac{1}{\rho_0^+ u_{10}^+}-C\ve \ge \frac{1}{2\rho_0^+ u_{10}^+} $, for small $\ve$. Take $w_1 =  4\rho_0^+ u_{10}^+ \ve$ and we use the same extension map as in Lemma 6.37 in \cite{gilbarg} to extend $l$ to $[-w_1, \infty)$ as follows. The extended function is still denoted by $l$. Let
 \[
 h(s) = l(s)-\frac{s}{\rho_0^+ u_{10}^+}.
 \]
 For $s \in [-w_1, 0)$, define
\[
 h(s) = \sum_{i=1}^2 c_i h(-s/i),
 \]
 where $c_1, c_2$ are constants determined by the algebraic equations
 \[
  \sum_{i=1}^2 c_i (-1/i)^m = 0, \qquad m=0,1.
 \]
 Then define
 \[
 l(s) = h(s) + \frac{s}{\rho_0^+ u_{10}^+},
 \]
 for $s \in [-w_1, 0) $. The extended function $l$ satisfies
 \begin{align*}
 \big\|l-\frac{y_2}{\rho_0^+ u_{10}^+}\big\|_{1,\alpha; (\beta); [-w_1, \infty)}^{*} \le C\ve,\\
l'(y_2)  \ge \frac{1}{2\rho_0^+ u_{10}^+} , \quad \text{ for }  y_2 \in [-w_1 ,\infty).
 \end{align*}
 Let $- w_2 = l(-w_1)$ and we know
 \begin{align*}
 w_2 & = -l(0) + \int_{-w_1}^{0} l'(s) ds\\
 & \ge -w_0 + \frac{w_1}{2\rho_0^+ u_{10}^+}\\
 & \ge \ve \quad (|w_0| \le \ve \text{ by } \eqref{con-wallTS}).
 \end{align*}
 Thus, $g = l^{-1}$ is defined on $[-\ve, \infty)$.
Therefore, by \eqref{P-3}, we have
$$
\|g(\phi(\yy)-\tan\theta_0\,y_1)-y_2\|_{1,\alpha; (\beta,0); \D_{\hat{\sigma}}}\le C\ve.
$$
In the same manner as for $l$, we can extend $U^\infty$ to domain $[-w_1, \infty)$ such that
\[\|U^\infty-U_0^+\|_{0, \alpha; (1+\beta);[-w_1, \infty)}\le C\ve.
\]
We define
$$
V^\infty(s)=U^\infty(g(s)),   \quad \text{ for }  s \in [-\ve ,\infty) .
$$
Then we employ \eqref{P-2a} to obtain
\begin{equation}\label{P-6}
\|V^\infty(\phi(\yy)-\tan\theta_0\,y_1)-U^\infty(y_2)\|_{0,\alpha; (\beta,1); \D_{\hat{\sigma}}}\le C\ve.
\end{equation}

\smallskip
{\bf 4.} Next, we use that
the change of variables $\yy\to \xx$ is globally bi-Lipschitz, which follows from \eqref{P-1} and
\eqref{eqn-phi-deri} that implies the Jacobian:
$$
J=\frac{1}{\rho^\infty u_1^\infty}\ge \frac{1}{C}>0
$$
if $\ve$ is small, by \eqref{P-2a}.

We also note that, in the $\yy$-coordinates, \eqref{P-1} implies for $U^\infty=U^\infty(y_2)$
that
$$
\|U-U^\infty\|_{1,\alpha; (\beta,1); \D_{\hat{\sigma}}}^{(-\alpha); \partial\mathcal{W}}\le C\ve.
$$
Then, changing the variables from $\yy$ to $\xx$ (which is bi-Lipschitz)
and using \eqref{P-6}, we obtain that, in the $\xx$-coordinates with $x_2=\phi(\yy)$,
$$
\|U-V^\infty(x_2-\tan\theta_0\,x_1)\|_{0,\alpha; (\beta,1); \Omega_{\mathcal{S}}}\le C\ve.
$$
This completes the proof for the decay of solution $U(\xx)$ to the asymptotic state $U^\infty$.

\section{Stability of Solutions}

In this section, we prove that the subsonic solutions are stable
under small perturbations of the incoming flows and the wedges
as stated in Theorem 2.1. We modify operator $\mathcal{P}$ into
$\overline{\mathcal{P}}$ as follows:

We first modify the definitions of the spaces in
\eqref{def-norm-x}--\eqref{def-space-z} in \S \ref{sec-pro-oper}
by discarding the constraints:
\begin{eqnarray*}
 \overline{X}= \{\gl :
\|\gl\|_X < \infty \}, \qquad
  \overline{Z}=\{\mu : \|\mu\|_Z < \infty \} ,
\end{eqnarray*}
where norms $\| \cdot \|_X$ and $\| \cdot \|_Z$ are  defined
in \eqref{def-norm-x} and \eqref{def-norm-z}, respectively. We
still use the same space $Y$ and the related norm as in
\eqref{def-space-y} and \eqref{def-norm-y}.

Let $\omega = U^- - U^-_0$, $\gl = A - A^+_0$, and $\mu = b-b_0$.
Given $(\gl, \omega, \mu)$, we define
$\overline{\mathcal{P}}(\gl, \omega, \mu)$ in the same way
as for $\mathcal{P}$ by the end of
\S \ref{sec-ell-pro}, except that we do not restrict the value
of $A(0)$ by \eqref{def-fix-a0}.
The restriction for $A(0)$ is
essential for the isomorphism of $D_\gl \mathcal{P}$.
To prove the stability, we need to eliminate this restriction
so that the differentiability in $\omega$ can be achieved
in a larger space.

Equation \eqref{eqn-Mbar} can be written as
\begin{equation}\label{eqn-mbar-U-}
    \sum_{i=1,2} \overline{M}^i(U_-(\varphi, z_2), A_0^+ + \gl, \nabla
    \varphi)_{z_i}=0.
\end{equation}

Given $(\gd \gl, \gd \omega, \gd \mu) \in \overline{X} \times Y
\times \overline{Z}$, define $D\overline{\mathcal{P}}(\gd \gl, \gd
\omega, \gd \mu) $ in the following way: We solve the following
equation for $\gd \varphi$:
\begin{equation}\label{eqn-dphi-a}
\sum_{i=1,2}\big(\sum_{j=1,2}a_{ij}^\gl (\gd \varphi)_{z_j} + b_i^\gl \gd \varphi
   + d_i^\gl \gd \gl + f_i^\gl \cdot \gd \omega\big)_{z_i}=0
\end{equation}
with the boundary conditions:
\begin{eqnarray}
  &&\gd \varphi |_{z_2 =0} =-\frac{\gd \mu(\tilde{b})
  - \gd \phi^-(\tilde{b}, 0)}{b' (\tilde{b})-
  (\phi^-)_{y_1}(\tilde{b},0)},\\[1mm]
 && \big(\sum_{i=1,2}\nu_i^\gl (\gd \varphi)_{z_i} + c^\gl \gd \varphi
  + w^\gl\cdot \gd \omega\big)|_{z_1 =0 }=0,
\end{eqnarray}
where
\begin{eqnarray*}
&&a_{ij}^\gl=\overline{M}^i_{\varphi_{z_i}}
  (U^-(\varphi, z_2), A_0^+ + \gl, \varphi, \nabla \varphi),\\[1mm]
&&b_i^\gl= \overline{M}^i_{U^-}
  (U^-(\varphi, z_2), A_0^+ + \gl, \varphi, \nabla \varphi)
  \cdot (U^-)_{y_1} (\varphi, z_2),\\[1mm]
&&d_i^\gl=\overline{M}^i_A
  (U^-(\varphi, z_2), A_0^+ + \gl, \varphi, \nabla \varphi),\\[1mm]
&&f_i^\gl= \overline{M}^i_{U^-}
  (U^-(\varphi, z_2), A_0^+ + \gl, \varphi, \nabla \varphi),\\[1mm]
&&\nu_i^\gl=g_{\varphi_{z_i}}(U^-(\varphi, z_2), \nabla \varphi),\\
&&c^\gl= g_{U^-}(U^-(\varphi, z_2), \nabla\varphi)\cdot(U^-)_{y_1}(\varphi, z_2),\\[1mm]
&&w^\gl = g_{U^-}(U^-(\varphi, z_2), \nabla \varphi).
\end{eqnarray*}
Then we define
\begin{equation}\label{def-tilde-delta-lambda1}
    \widetilde{\gd \gl} = D \overline{\mathcal{P}}(\gl,\omega,\mu) (\gd \gl, \gl\omega,\gl \mu)
    :=\sum_{i=1,2}e_i^\gl (\gd \varphi )_{z_i} + e_0^\gl \gd \varphi +  w_1^\gl\cdot \gd \omega,
\end{equation}
where
\begin{eqnarray*}
&&e_i^\gl= h_{\varphi_{z_i}}(U^-(\varphi, z_2), \nabla \varphi  ),\\[1mm]
&&e_0^\gl= h_{U^-}(U^-(\varphi, z_2), \nabla\varphi)\cdot(U^-)_{y_1}(\varphi, z_2),\\[1mm]
&&w_1^\gl = h_{U^-}(U^-(\varphi, z_2), \nabla \varphi).
\end{eqnarray*}

Following the same estimates as in \S \ref{sec-pro-oper}, we
can verify that $D \overline{\mathcal{P}}$ is the differential of
$\overline{\mathcal{P}}$.

Let $\mathcal{R}:=\overline{\mathcal{P}}-I$. In \S \ref{sec-pro-oper},
given $(\omega, \mu)$, we can find the fixed point
for $A$. Therefore, $\gl$ is a function of $(\omega, \mu)$, denoted
by $\gl(\omega, \mu)$. Therefore, we have
\begin{equation}\label{eqn-pbar}
  \mathcal{R} (\gl(\omega, \mu), \omega, \mu) =0.
\end{equation}

\medskip
Suppose that there is another parameter
$(\bar{\omega}, \bar{\mu})$ so that
\begin{equation}\label{eqn-pbar-bar}
  \mathcal{R} (\gl(\bar{\omega}, \bar{\mu}), \bar{\omega}, \bar{\mu}) =0.
\end{equation}

Taking the difference of equations \eqref{eqn-pbar} and
\eqref{eqn-pbar-bar}, we have
\begin{eqnarray*}
D_\gl \mathcal{R}(\gl,\omega,\mu) (\bar{\gl}-\gl) +
D_{(\omega,\mu)}(\bar{\omega} - \omega, \bar{\mu}-\mu)
+ {\rm o}(\bar{\gl}-\gl,\bar{\omega} - \omega, \bar{\mu}-\mu) =0,
\end{eqnarray*}
where $\bar{\gl} = \gl(\omega, \mu)$.
Since $D_\gl \mathcal{R}(\gl,\omega,\mu)$ is an isomorphism near the background
state, by inverting $D_\gl \mathcal{R}$, we obtain
\begin{eqnarray*}
 \bar{\gl} - \gl=-(D_\gl \mathcal{R})^{-1} D_{(\omega,\mu)}
 (\bar{\omega} - \omega, \bar{\mu} - \mu) + {\rm o}(\bar{\gl} -
  \gl,\bar{\omega} - \omega, \bar{\mu} - \mu).
\end{eqnarray*}
Therefore, we obtain the following inequality:
\begin{equation}\label{est-stab}
\|\bar{\gl} - \gl\|_X \le C\big(\|\bar{\omega} - \omega \|_Y +
\|\bar{\mu} - \mu \|_Z\big),
\end{equation}
which implies the stability of the solutions depending on the
perturbation of both the incoming flows and wedge boundaries.

\section{Remarks on the Transonic Shock Problem when $U^+_0$ Is on Arc $\wideparen{TH}$}
In this case, $\nu^\varphi_1 >0$ and
$\nu^\varphi_2>0$ in the boundary condition \eqref{con-obl-v}, which
makes a significant difference from the case when $U^+_0$ is on
arc $\wideparen{TS}$. Such a difference may affect the estimates,
hence the smoothness of the solutions, in general.

In particular, one may not expect a solution for case
$\wideparen{TS}$ is $C^{1, \alpha}$; it is generically only in
$C^\alpha$.

For example, in the first quadrant, let domain $OAB$ be the
quarter of the unit disc.
The oblique direction $\nnu = (-1,-1)$.
Let $u = r^{\frac{1}{2}} \sin(\frac{\theta}{2})$,
where $(r, \theta)$ are the polar coordinates.
In $OAB$, $u$ satisfies the Laplace equation:
\begin{equation}\label{eqn-laplace-u}
    \Delta u =0,
\end{equation}
and the boundary conditions:
$u=0$ on $OA$, and $\nabla u \cdot \nnu = 0$ on $OB$.
However, $u$ is  H\"older continuous only in $C^{\frac{1}{2}}$.

\begin{figure}
 \centering
\includegraphics[height=55mm]{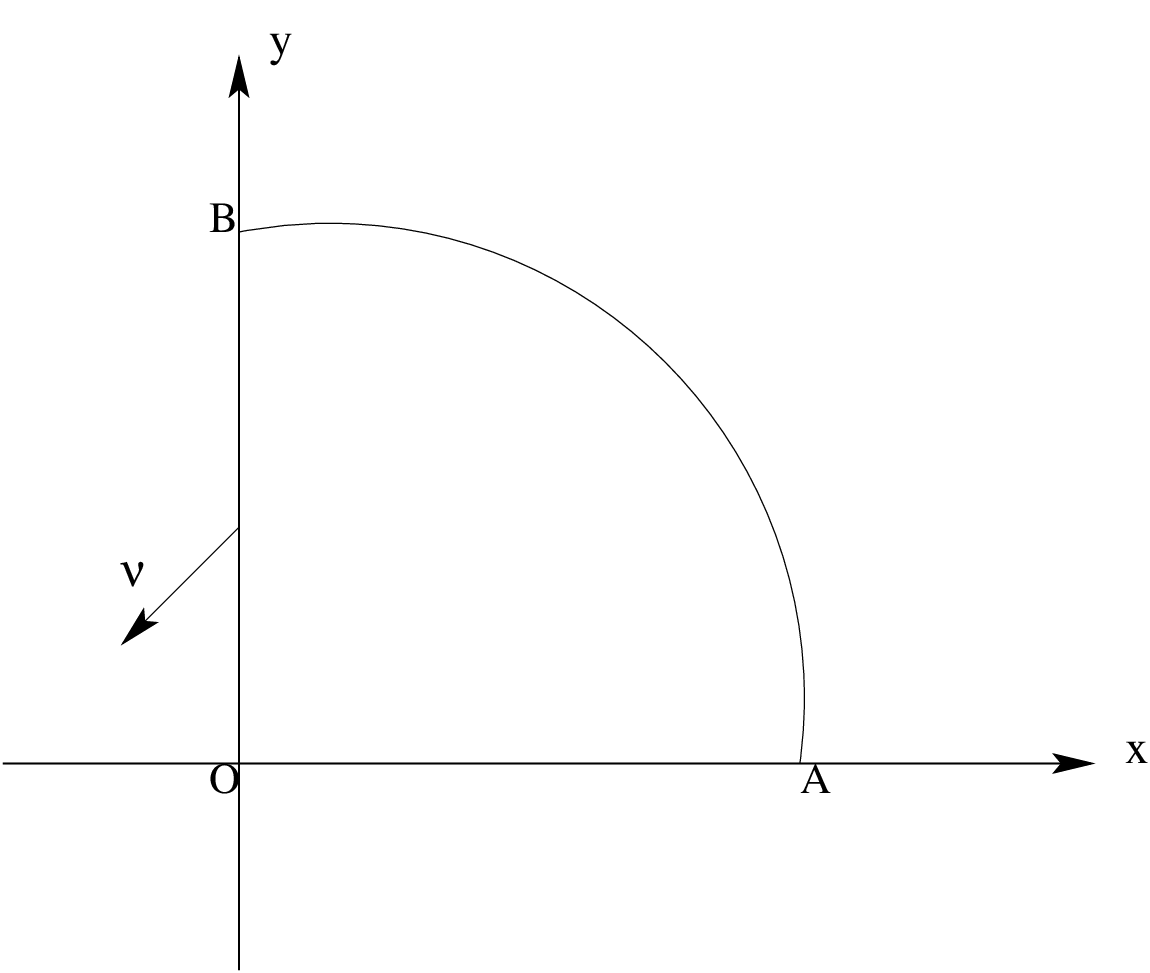}
\end{figure}

Therefore, it requires a further understanding of global
features of the problem,
especially the global relation between the regularity near
the origin and the decay of solutions at infinity,
to ensure the existence of a smooth solution,
more regular than the H\"{o}lder continuity.
A different approach may be required to handle this case.

\appendix
\section{Two Comparison Principles}

In this appendix, we establish two comparison principles.

Suppose that $\Omega$ is a bounded, connected, and open set in
$\R^n$.
Define a uniformly elliptic operator
$$
L\equiv \sum_{i=1,2}\partial_{x_i}\big(\sum_{j=1,2}a_{ij}(\xx)\partial_{x_j} + b_i(\xx)\big)
\qquad
\mbox{in $\Omega$}
$$
in the following sense:
\[
\sum_{i,j=1,2}a_{ij}(\xx)\xi_i\xi_j \ge \gl |\xi|^2\qquad \mbox{for any $\xx \in
\Omega$ and $\xi \in \R^n$},
\]
where $\gl$ is a positive constant. Assume that $a_{ij}, b_i \in
C^1(\Omega)\cap C(\bar{\Omega})$.

\begin{theorem} \label{thm-max1}
 Suppose that $v, w \in C^2(\Omega) \cap
C(\bar{\Omega})$ satisfy
\begin{enumerate}
\item [\rm (i)]
$L v \ge 0$ and $Lw \le 0$ in $\Omega$;
\item[\rm (ii)]
$w>0$ in $\bar{\Omega}$.
\end{enumerate}
Then $\frac{v}{w}$ achieves its positive maximum on the boundary:
\begin{equation}\label{ine-compare1}
\sup_{\Omega}\Big(\frac{v}{w}\big)\le \sup_{\partial \Omega}
\Big(\frac{v^+}{w}\Big).
\end{equation}
\end{theorem}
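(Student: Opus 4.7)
The plan is to reduce the comparison inequality to the weak maximum principle for a uniformly elliptic operator with no zero-order term, applied to the quotient $u := v/w$ restricted to the subregion where it is positive. Since $w>0$ on $\bar\Omega$ and $v,w \in C^2(\Omega)\cap C(\bar\Omega)$, the function $u$ enjoys the same regularity. The key computation I would carry out is a divergence-form product rule: using
\[
a_{ij}\partial_{x_j}(uw) + b_i(uw) \;=\; w\,a_{ij}\partial_{x_j}u + u\bigl(a_{ij}\partial_{x_j}w + b_i w\bigr)
\]
and differentiating in $x_i$, one arrives at the identity
\[
Lv - u\,Lw \;=\; \partial_{x_i}\bigl(w\,a_{ij}\partial_{x_j}u\bigr) + \bigl(a_{ij}\partial_{x_j}w + b_i w\bigr)\,\partial_{x_i}u \;=:\; \tilde L u.
\]
The operator $\tilde L$ is uniformly elliptic, since its principal matrix $\tilde a_{ij} = w\,a_{ij}$ satisfies $\tilde a_{ij}\xi_i\xi_j \ge (\min_{\bar\Omega}w)\lambda\,|\xi|^2$, its lower-order coefficients are bounded by the $C^1$ regularity of $a_{ij}, b_i, w$, and it contains no zero-order term in $u$.

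Next, I would dispose of the trivial case: if $\sup_\Omega u \le 0$ then $\sup_\Omega(v/w)\le 0\le \sup_{\partial\Omega}(v^+/w)$, so \eqref{ine-compare1} holds. Otherwise I may assume $\sup_\Omega u > 0$, and then the open set $E := \{\xx\in\Omega : u(\xx) > 0\}$ is nonempty with $\sup_\Omega u = \sup_{\bar E} u$. On $E$ I have $u>0$ together with $Lv\ge 0$ and $Lw\le 0$, which gives $uLw \le 0 \le Lv$; hence
\[
\tilde L u \;=\; Lv - u\,Lw \;\ge\; 0 \qquad \text{on } E.
\]
The standard weak maximum principle for uniformly elliptic operators with no zero-order term (cf.\ Chapter~8 of \cite{gilbarg}) then yields $\sup_E u \le \sup_{\partial E} u$.

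To finish, I would dissect $\partial E$. By continuity of $u$ on $\bar\Omega$, each point of $\partial E$ lies either in $\partial\Omega$ or in $\Omega\cap\{u=0\}$; on the latter piece $u=0\le \sup_{\partial\Omega}(v^+/w)$, while on the former $u = v/w \le v^+/w \le \sup_{\partial\Omega}(v^+/w)$. Combining these bounds with the preceding display gives \eqref{ine-compare1}. The one non-routine step is the divergence-form identity $Lv - uLw = \tilde L u$; once it is verified, restricting to $E=\{u>0\}$ is precisely the device that removes the sign ambiguity of $uLw$, and the rest is a direct application of the classical maximum principle together with the hypothesis $w>0$ on $\bar\Omega$.
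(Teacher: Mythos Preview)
Your proof is correct and follows essentially the same substitution idea as the paper: write $V=v/w$, derive an elliptic equation/inequality for $V$, and invoke the weak maximum principle from \cite{gilbarg}. The only cosmetic difference is that the paper keeps the zero-order term $\frac{Lw}{w}V$ (with nonpositive coefficient) in the operator and applies Theorem~8.1 of \cite{gilbarg} directly on all of $\Omega$, whereas you multiply through by $w$ to eliminate the zero-order term and then restrict to $E=\{u>0\}$ to fix the sign of the right-hand side---this restriction is exactly what the proof of the $c\le 0$ maximum principle does internally, so the two arguments are equivalent.
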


\begin{proof}
Let \[V=\frac{v}{w}, \qquad B_i = 2\sum_{j=1,2} a_{ij}\frac{w_{x_j}}{w} +b_i. \]
By calculation, we have
\begin{equation}\label{eqal-compare}
    \sum_{i,j=1,2}(a_{ij}V_{x_i})_{x_j} + \sum_{i=1,2}B_i V_{x_i} + \frac{Lw}{w}V =
    \frac{Lv}{w}.
\end{equation}
By assumption, we know that $ \frac{Lw}{w} \le 0$ and
$\frac{Lv}{w}\ge 0$. Therefore, by the weak maximum principle,
Theorem 8.1 in \cite{gilbarg}, we conclude \eqref{ine-compare1}.
\end{proof}

\begin{theorem}\label{thm-max2}
 Suppose that $v, w \in C^2(\Omega) \cap
C(\bar{\Omega})$ satisfy
\begin{enumerate}
\item[\rm (i)] $L v \ge Lw$  and $Lw < 0$ in $\Omega$;
\item[\rm (ii)] $w>0$ in $\bar{\Omega}$.
\end{enumerate}
Then $\frac{v}{w}$ achieves its positive maximum on the boundary or
no greater than $1$ in $\Omega$:
\begin{equation}\label{ine-compare2}
\sup_{\Omega}\Big(\frac{v}{w}\Big)\le \max\Big\{\sup_{\partial
\Omega}\Big(\frac{v^+}{w}\Big), 1\Big\}.
\end{equation}
\end{theorem}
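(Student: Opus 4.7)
The proof plan is to mirror the argument used for Theorem~\ref{thm-max1}, but applied to $V-1$ rather than $V$, where $V=v/w$. The motivation is that the hypothesis $Lv\ge Lw$ is naturally the difference $L(v-w)\ge 0$, so the right-hand side of the Fichera-type identity \eqref{eqal-compare} will be handled via the shifted quantity $V-1$.

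First, I would introduce $V=v/w$ (legitimate since $w>0$ on $\bar\Omega$) and recall that the same computation producing \eqref{eqal-compare} gives
\begin{equation*}
\mathcal{L}V:=\sum_{i,j=1,2}(a_{ij}V_{x_i})_{x_j}+\sum_{i=1,2}B_i V_{x_i}+\frac{Lw}{w}V=\frac{Lv}{w},
\end{equation*}
where $B_i=2\sum_j a_{ij}w_{x_j}/w+b_i$ is exactly as in the first theorem. Since $\mathcal{L}(1)=Lw/w$, subtracting gives
\begin{equation*}
\mathcal{L}(V-1)=\frac{Lv-Lw}{w}=\frac{L(v-w)}{w}\ge 0
\end{equation*}
by hypothesis (i), and the zeroth-order coefficient $c(x):=Lw/w$ of $\mathcal{L}$ is non-positive because $Lw<0$ and $w>0$.

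Next, I would invoke the standard weak maximum principle (Theorem~8.1 of \cite{gilbarg}) for divergence-form operators with $c\le 0$, applied to $U:=V-1$. This yields
\begin{equation*}
\sup_{\Omega}(V-1)\le \sup_{\partial\Omega}(V-1)^{+}.
\end{equation*}
Then I would unwind the right-hand side: if $V\le 0$ at a boundary point then $(V-1)^+=0$; otherwise $V=v/w=v^{+}/w$ (again using $w>0$), so $(V-1)^+=(v^{+}/w-1)^{+}$. A two-case split on whether $v^{+}/w\le 1$ everywhere on $\partial\Omega$ or not then gives either $\sup_\Omega V\le 1$ or $\sup_\Omega V\le \sup_{\partial\Omega}(v^{+}/w)$, which in both cases is subsumed by \eqref{ine-compare2}.

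I do not anticipate a substantive obstacle: the derivation of \eqref{eqal-compare} was already done in Theorem~\ref{thm-max1}, the sign of the lower-order coefficient $Lw/w$ is the favorable one, and the only delicate bookkeeping is the final case-split to convert $(V-1)^{+}$ on the boundary into the quantity $v^{+}/w$ appearing in \eqref{ine-compare2}. The reliance on the strict inequality $Lw<0$ (rather than merely $Lw\le 0$) is not exploited in the argument beyond ensuring $c\le 0$; it is the hypothesis (ii), $w>0$ on the closure, that does the real work by letting us divide and thereby convert the inhomogeneous comparison $Lv\ge Lw$ into a homogeneous maximum principle for the shifted quotient.
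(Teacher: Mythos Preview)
Your argument is correct, and it takes a genuinely different route from the paper's. The paper rearranges \eqref{eqal-compare} as
\[
\sum_{i,j}(a_{ij}V_{x_i})_{x_j}+\sum_i B_i V_{x_i}\ge \frac{Lw}{w}(1-V),
\]
then argues by contradiction: if $V$ attains an interior maximum $M>1$, one localizes to a small ball where $V>1$, so the right-hand side is strictly positive (here the \emph{strict} inequality $Lw<0$ is used), and the strong maximum principle (Theorem~8.19 in \cite{gilbarg}) forces $V\equiv M$ there, contradicting strict positivity. Your approach instead absorbs the zeroth-order term into the operator and applies the weak maximum principle directly to $V-1$, bypassing both the contradiction and the strong maximum principle. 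What you gain is a shorter argument that, as you observe, only requires $Lw\le 0$ rather than $Lw<0$; what the paper's approach illustrates (at the cost of a slightly longer proof) is the localization mechanism and how the strict sign of $Lw$ rules out interior plateaus. Your final bookkeeping converting $\sup_{\partial\Omega}(V-1)^+$ into $\max\{\sup_{\partial\Omega}(v^+/w),1\}$ is fine; it amounts to the identity $1+(\sup_{\partial\Omega}V-1)^+=\max\{\sup_{\partial\Omega}V,1\}$ together with $\sup_{\partial\Omega}(v/w)\le \sup_{\partial\Omega}(v^+/w)$.
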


\begin{proof}
Equation \eqref{eqal-compare} implies
  \begin{eqnarray}
  \sum_{i,j=1,2}(a_{ij}V_{x_i})_{x_j} + \sum_{i=1,2}B_i V_{x_i}&=& \frac{Lw}{w}(1-V) +
    \frac{L v - L w}{w}\nonumber\\
    &\ge& \frac{Lw}{w}(1-V). \label{eqal-compare2}
\end{eqnarray}
Assume that $V$ achieves the maximum value $M>1$ at some interior
point $\xx_0 \in\Omega$. Then, by continuity of $V$, there exists a
ball $B_M \equiv B_r(\xx_0) \subset \Omega$ such that
\begin{eqnarray*}
&&\sup_{B_M} V= \sup_{\Omega} V =M >1, \\[1mm]
&&V>1 \qquad\mbox{in $B_M$}.
\end{eqnarray*}
Therefore,
$$
\frac{Lw}{w}(1-V) >0 \qquad\mbox{in $B_M$},
$$
and \eqref{eqal-compare2} implies
\begin{equation}\label{ine-positive-L}
\sum_{i,j=1,2}(a_{ij}V_{x_i})_{x_j} + \sum_{i=1,2}B_i V_{x_i} >0 \qquad \mbox{in} \,\,\, B_M.
\end{equation}
By the strong maximum principle, Theorem 8.19 in \cite{gilbarg}, we
conclude
$$
V \equiv M \qquad\mbox{in $B_M$}.
$$
This implies
that
\[
\sum_{i,j=1,2}(a_{ij}V_{x_i})_{x_j} + \sum_{i=1,2}B_i V_{x_i} =0\qquad \mbox{in} \,\,\, B_M,
\]
which contradicts \eqref{ine-positive-L}. This completes the proof.
\end{proof}

\section{The Shock Polar}
We consider the uniform constant transonic flows with horizontal
incoming supersonic flows. We now employ
the Rankine-Hugoniot conditions \eqref{con-RH1}--\eqref{con-RH4}
to derive a criterion for different arcs $\wideparen{TS}$ and
$\wideparen{TH}$ on the shock polar.

Assume that $U^-$ and $U$ are constant supersonic and subsonic
states, respectively. The shock-front is a straight line: $y_1= s y_2$.
Let $k=\frac{u_2}{u_1}$ and $k^-= \frac{{u_2}^-}{{{u_1}^-}} =0$.
Then the Rankine-Hugoniot conditions \eqref{con-RH1}--\eqref{con-RH4} give
rise to
\begin{eqnarray}\label{con-RH-polar1}
 &&\big[\frac{1}{\rho u_1}\big]=-k s,\\[1mm]
 && \big[ u_1 + \frac{p}{\rho u_1}\big]=-pk s,
  \label{con-RH-polar2}\\[1mm]
&&u_1k= [\,p\,] s,
\label{con-RH-polar3}\\[1mm]
&& \big[ \frac{1}{2}u_1^2(1+k^2) + \frac{\gc p}{(\gc-1)\rho}\big]=
0. \label{con-RH-polar4}
\end{eqnarray}
From \eqref{con-RH-polar3},  $s=\frac{u_1k}{[\,p\,] }$.
Replacing $s$ in \eqref{con-RH-polar1} and \eqref{con-RH-polar2}, we
obtain
\begin{eqnarray}\label{con-RH-polar01}
 &&\big[\frac{1}{\rho u_1}\big][\,p\,]+u_1k^2=0,\\[1mm]
 && \big[ u_1 + \frac{p}{\rho u_1}\big][\,p\,]+ u_1pk^2=0.
  \label{con-RH-polar02}
\end{eqnarray}
From equations \eqref{con-RH-polar4}--\eqref{con-RH-polar02},
we can solve $\rho, u_1$, and $k$ in terms of
$p$.
Regarding $(\rho, u_1, k)$ as functions of $p$, we
differentiate \eqref{con-RH-polar4}--\eqref{con-RH-polar02}
with respect to $p$ to obtain
\begin{equation}\label{eqn-Up}
 B X=f,
\end{equation}
where
\begin{eqnarray*}
&&B= \left(%
\begin{array}{ccc}
  -\frac{[p]}{\rho^2 u_1}, & k^2- \frac{[p]}{\rho u_1^2},  & 2 u_1 k \\[2mm]
  -p\frac{[p]}{\rho^2 u_1}, &pk^2- \frac{p[p]}{\rho u_1^2}+[p],  &2p u_1 k \\[2mm]
  \frac{\gc p}{(\gc-1)\rho^2}, & -u_1(k^2+1), & -u_1^2k \\
\end{array}%
\right),  \\[2mm]
&&X=(\rho_p, (u_1)_p, k_p)^\top,\\[1mm]
&&f=(%
  -[ \frac{1}{\rho u_1} ],
  -[ u_1 + \frac{p}{\rho u_1}]
  -\frac{[\,p\,]}{\rho u_1}+u_1k^2,
 \frac{\gc }{(\gc-1)\rho})^\top.
\end{eqnarray*}
We solve equation \eqref{eqn-Up} for $k_p$  to obtain
\begin{equation}\label{exp-kp}
  k_p = -\frac{\rho\, C_p}{C_0},
\end{equation}
where
\begin{eqnarray}
&&C_p=[p]\big(c^2 + (\gc -1)q^2 - \gc u_1^2\big)
 + (\gc -1)\rho q^2 u_2^2
 + \Big[\frac{1}{\rho u_1} \Big] \rho^2 c^2 u_1 q^2,\\[1mm]
&& C_0 =u_1^3u_2\rho^2\big((\gc+1)p + (\gc-1)p^-\big).
\end{eqnarray}
Notice that $C_0 >0$. Then, when state $U$ belongs to
$\wideparen{TS}$, we find that $k_p >0$, which is equivalent to
\begin{equation}\label{B.11}
C_p <0.
\end{equation}
On $\wideparen{TH}$, we obtain that $C_p >0$.

\section*{Acknowledgments} Gui-Qiang Chen's research was supported in
part by
by NSF Grant DMS-0807551,
the UK EPSRC Science and Innovation award to the Oxford Centre for Nonlinear PDE (EP/E035027/1),
the UK EPSRC Award to the EPSRC Centre for Doctoral Training
in PDEs (EP/L015811/1), and the Royal Society--Wolfson Research Merit Award (UK).
The work of Mikhail Feldman was
supported in part by the National Science Foundation under Grants
DMS-1101260 and DMS-1401490.

\bigskip
\medskip

\end{document}